\begin{document}
\title{Mathematical Analysis of Autonomous and Nonautonomous Hepatitis B Virus Transmission Models
}

\titlerunning{Mathematical Analysis of Autonomous and Nonautonomous HBV Model}

\author{Abdallah Alsammani}
\authorrunning{A. Alsammani}
%
\institute{Department of Mathematics, Jacksonville University, Jacksonville, FL 32211, USA \\ 
\email{aalsamm@ju.edu}
}
\maketitle              

\begin{abstract}
This study presents an improved mathematical model for Hepatitis B Virus (HBV) transmission dynamics by investigating autonomous and nonautonomous cases. The novel model incorporates the effects of medical treatment, allowing for a more comprehensive understanding of HBV transmission and potential control measures. Our analysis involves verifying unique solutions' existence, ensuring solutions' positivity over time, and conducting a stability analysis at the equilibrium points. Both local and global stability are discussed; for local stability, we use the Jacobian matrix and the basic reproduction number, $R_0$. For global stability, we construct a Lyapunov function and derive necessary and sufficient conditions for stability in our models, establishing a connection between these conditions and $R_0$. Numerical simulations substantiate our analytical findings, offering valuable insights into HBV transmission dynamics and the effectiveness of different interventions. This study advances our understanding of Hepatitis B Virus (HBV) transmission dynamics by presenting an enhanced mathematical model that considers both autonomous and nonautonomous cases.

\keywords{HBV model, Nonautonomous, stability analysis, DFE, epidemic equilibrium, numerical simulations }
\end{abstract}
%

\section{Introduction}
Hepatitis B Virus (HBV) is a significant global health concern, affecting millions of people worldwide and posing a considerable burden on public health systems \cite{WHO}. The transmission dynamics of HBV are complex, involving multiple interacting factors such as the rates of infection, recovery, and medical treatment. Understanding these dynamics is essential for devising effective prevention and control strategies \cite{mcmahon2009}.

Mathematical models have been widely employed to study the transmission dynamics of infectious diseases, including HBV \cite{hethcote2000,nowak2000virus}. Early HBV models primarily focused on autonomous systems, assuming constant parameters over time \cite{nowak96,perelson2002}. However, more recent models have considered nonautonomous systems, taking into account time-varying parameters and seasonal fluctuations\cite{ma2009,khatun2020}. These models provide a more realistic representation of the disease transmission process.

In this study, we present an improved mathematical model for HBV transmission dynamics by investigating both autonomous and nonautonomous cases. The model incorporates the effects of medical treatment, allowing for a more comprehensive understanding of HBV transmission and potential control measures. The analysis includes verifying the existence of unique solutions, ensuring the positivity of solutions over time, and conducting a stability analysis at the equilibrium points \cite{diekmann1990,li1995}.

We discuss both local and global stability. For local stability, we use the Jacobian matrix and the basic reproduction number, $R_0$ \cite{van2002}. For global stability, we construct a Lyapunov function and derive necessary and sufficient conditions for stability in our models, establishing a connection between these conditions and $R_0$ \cite{korobeinikov2004a,cao2015}.

Numerical simulations substantiate our analytical findings, offering valuable insights into HBV transmission dynamics and the effectiveness of different interventions. The results of this study contribute to the growing body of literature on HBV mathematical modeling and provide a basis for further research and policy development.

\section{Model formulation}
A nonlinear differential equation model was developed to study HBV transmission, considering medical treatment effects and various rates \cite{nowak96,nowak2000virus,alsammani2020dynamical,perelson1996hiv,My_Paper}. The model is defined as:

\begin{eqnarray}\label{sys-2}
	\begin{cases} 
		\frac{dx}{dt} & =\Lambda -\mu_1 x - (1-\eta) \beta xz +qy \\
		\frac{dy}{dt}&= (1-\eta) \beta xz - \mu_2 y -qy \\
		\frac{dz}{dt} &= (1-\epsilon)py-\mu_3 z  
	\end{cases} 
\end{eqnarray}
Here, the variables and parameters represent the following
\begin{itemize}
	\item $x(t)$: The number of uninfected cells (target cells) at time $t$.
	\item $y(t)$: The number of infected cells at time $t$.
	\item $z(t)$: The number of free virus particles at time $t$.
\end{itemize}
Table \ref{T1} summarizes the description of the parameters in the system (\ref{sys-2}).
\begin{table}[h]
	\centering
 \caption{Parameters descriptions}\label{T1}
	\begin{tabular}{c l}
		\hline 
		\textbf{Parameter }    &  \textbf{Description}  \\
		\hline
		$\Lambda $   &   \textit{Production rate of uninfected cells $x$}. \\
		$\mu_1$      &   \textit{Death rate of $x$-cells}. \\
		$\mu_2$      &  \textit{Death rate of $y$-cells.}  \\
		$\mu_3$    &   \textit{Free virus cleared rate.}\\
		$\eta$        &  \textit{Fraction that reduced infected rate after treatment with the antiviral drug.} \\
		$\epsilon$  &   \textit{Fraction that reduced free virus rate after treatment with the antiviral drug.} \\
		$p$ & \textit{Free virus production rate $y$-cells}  \\
		$\beta$ &   \textit{Infection rate of $x$-cells by free virus $z$.} \\
		$q $ &   \textit{Spotaneous cure rate of $y$-cells by non-cytolytic process.} \\
		\hline
	\end{tabular}
\end{table}

Notice that $\eta$ and $\epsilon$ are small positive fractions between $0$ and $1$, then $(1-\eta)>0$ and $(1-\epsilon) >0$, also all other parameters $\beta, q, p , \mu_1, \mu_2$ and $\mu_3$ are positive. 

\textbf{Notations: } Throughout this paper, we will consider the following.

\begin{itemize}
	\item $\mathbb{R}^3= \{(x,y,z) \vert \,\, (x,y,z)\in \mathbb{R} \}$, 	and  $\mathbb{R}_+^3= \{(x,y,z)\in \mathbb{R}^3 \vert \,\,\, x\geq 0, \, y\geq 0,\, z\geq 0\}$.
	
	\item If $\textbf{u}=(x,y,z)^T \in \mathbb{R}^3$ then the system (\ref{sys-2}) can be written as 
	\begin{equation}\label{ODE}
		\frac{d\textbf{u}(t)}{dt} = f(\textbf{u}(t))
	\end{equation}
	where 
	\begin{equation}\label{rhs-f}
		f(\textbf{u}(t))=f(x(t),y(t),z(t))= \left(
		\begin{matrix}  
			\Lambda -\mu_1 x - (1-\eta) \beta xz +qy\\
			(1-\eta) \beta xz - \mu_2 y -qy\\
			(1-\epsilon)py-\mu_3 z
		\end{matrix} \right)
	\end{equation}
	and $u_0=u(t_0)= (x(t_0), y(t_0), z(t_0))= (x_0,y_0,z_0)$.
\end{itemize}

\subsection{Properties of Solutions}
We discuss the basic properties of the HBV model, including solution existence, uniqueness, and positivity. Existence is ensured by Lipschitz continuity, uniqueness through the Picard-Lindelof or Banach's fixed-point theorems, and positivity by analyzing the model equations.

\begin{theorem}[Local Existence]\label{local_L}\,\\
	For any given $t_0 \in \mathbb{R}$ and $u_0=(x_0,y_0,z_0) \in \mathbb{R}_+ ^3 $ there exists $T_{max} = T_{max}(t_0, u_0)$ such that the system from (\ref{sys-2}) has a solution $(x(t;u_0)$,  $y(t;t_0,u_0)$,  $z(t;t_0,u_0))$ on $[t_0 , t_0+T_{max})$.
	Furthermore, If $T_{max}<\infty$ then the solution will blow up, i.e.,
	\[	\limsup_{t\rightarrow T_{max}} \left( \vert x(t_0+t ;t_0,u_0)  \vert + \vert y(t_0+t ;t_0,u_0)  \vert + \vert z(t_0+t ;t_0,u_0)  \vert \right) \\ = +\infty\]\label{blowup}

\end{theorem}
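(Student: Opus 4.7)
My strategy is the classical Picard–Lindel\"of/maximal-interval argument applied to the vector field $f$ displayed in (\ref{rhs-f}). First I would observe that each component of $f(x,y,z)$ is a polynomial in $(x,y,z)$: a linear part ($\Lambda,\mu_1 x,qy,\mu_2 y,p y,\mu_3 z$) plus the single bilinear coupling $(1-\eta)\beta xz$. Polynomials are $C^\infty$, hence $f$ is of class $C^1$ on all of $\mathbb{R}^3$, and therefore locally Lipschitz continuous on every compact subset of $\mathbb{R}^3$. Explicitly, on a closed ball $\overline{B_R(u_0)}\subset\mathbb{R}^3$ one can bound $\|f(u)-f(v)\|\le L_R\|u-v\|$ with $L_R$ depending only on $R$, $u_0$, and the constants $\Lambda,\mu_i,\beta,q,p,\eta,\epsilon$.

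Given this Lipschitz estimate, I would invoke the standard Picard--Lindel\"of theorem (equivalently, the Banach fixed-point theorem applied to the integral operator $(Tu)(t)=u_0+\int_{t_0}^{t}f(u(s))\,ds$ on the complete metric space $C([t_0,t_0+\tau],\overline{B_R(u_0)})$ with $\tau$ small enough that $\tau L_R<1$ and $\tau\sup_{\overline{B_R(u_0)}}\|f\|\le R$). This yields a unique $C^1$ solution $(x(\cdot),y(\cdot),z(\cdot))$ on some interval $[t_0,t_0+\tau]$ satisfying (\ref{ODE}) with initial datum $u_0$.

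Next I would define
\begin{equation*}
T_{\max}\;=\;\sup\bigl\{\,T>0\;:\;\text{a solution of (\ref{ODE}) with }u(t_0)=u_0\text{ exists on }[t_0,t_0+T)\,\bigr\},
\end{equation*}
and by concatenating local Picard solutions (uniqueness guarantees compatibility on overlaps) obtain a maximal solution on $[t_0,t_0+T_{\max})$. Note that $T_{\max}>0$ by the previous paragraph.

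Finally, for the blow-up alternative I would argue by contradiction: suppose $T_{\max}<\infty$ but
\begin{equation*}
\liminf_{t\to T_{\max}^-}\bigl(|x(t_0+t)|+|y(t_0+t)|+|z(t_0+t)|\bigr)<\infty.
\end{equation*}
Then along some sequence $t_n\nearrow T_{\max}$ the values $u(t_0+t_n)$ remain in a compact set $K\subset\mathbb{R}^3$. Using continuity of $f$ on $K$ (so $\|f\|$ is bounded there) together with the integral form of the equation, $u$ is uniformly Lipschitz near $T_{\max}$ and hence extends continuously to $t_0+T_{\max}$ with value $u^*\in K$. Applying the local existence result again at $(t_0+T_{\max},u^*)$ produces a solution on $[t_0+T_{\max},t_0+T_{\max}+\tau')$, which glues onto the original one and contradicts the maximality of $T_{\max}$. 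This forces the $\limsup$ of $|x|+|y|+|z|$ to be $+\infty$, which is the stated blow-up property.

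The main obstacle is really only bookkeeping: verifying that the bilinear term $(1-\eta)\beta xz$ does not destroy the local Lipschitz estimate (it does not, because we localize to a ball) and making the continuous-extension step at $t_0+T_{\max}$ rigorous. Once local Lipschitz continuity of $f$ and the Picard iteration are in place, the maximal-interval dichotomy is standard and the theorem follows immediately.
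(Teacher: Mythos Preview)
Your proposal is correct and takes exactly the paper's approach: the paper's own proof merely notes that $f$ is $C^1$ (continuous with continuous partials), invokes Picard--Lindel\"of, and then declares the blow-up alternative ``well known,'' so you have simply fleshed out the standard details it leaves implicit. One minor slip worth correcting: in the blow-up step the negation of the stated conclusion is $\limsup<\infty$ (i.e.\ the solution stays bounded near $T_{\max}$), not $\liminf<\infty$; with $\limsup<\infty$ your uniform-Lipschitz/continuous-extension argument works verbatim, whereas from only a bounded subsequence you would instead need to invoke the uniform lower bound on the Picard existence time over a compact set of initial data.
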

\begin{proof}
	It is clear that this function $f(u(t))$ in equation \ref{rhs-f} is continuous, and its  derivatives with respect to $x, y$, and $z$ are also continuous. Therefore, the system (\ref{sys-2}) has a unique local solution. 
	
	It is well known that solutions of ordinary differential equations may blow up in finite time. 
\end{proof}

Since the system \ref{sys-2} is a population system, it is very important to ensure that the solution is always positive.

\begin{lemma} \label{L_1}
	suppose $(x(t_0) , y(t_0),z(t_0))\in \mathbb{R}_+^3$ is the initial value of the system \ref{sys-2}, then the solution $(x(t) , y(t), z(t))$ is positive for all $t \in [ t_0, t_0 + T_{max} )$.
\end{lemma}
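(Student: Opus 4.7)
The plan is to rewrite each scalar equation in Duhamel (integrating-factor) form and then run a first-exit-time argument for the positive orthant. Treating the other two components as prescribed coefficients, I would derive
\[
z(t) = z_0 e^{-\mu_3(t-t_0)} + (1-\epsilon)p \int_{t_0}^t e^{-\mu_3(t-s)} y(s) \, ds,
\]
\[
y(t) = y_0 e^{-(\mu_2+q)(t-t_0)} + (1-\eta)\beta \int_{t_0}^t e^{-(\mu_2+q)(t-s)} x(s) z(s) \, ds,
\]
\[
x(t) = x_0 e^{-\Phi(t_0,t)} + \int_{t_0}^t e^{-\Phi(s,t)} \bigl[\Lambda + q y(s)\bigr] ds,
\]
where $\Phi(a,b)=\int_a^b [\mu_1+(1-\eta)\beta z(\sigma)]\,d\sigma$. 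The exponential weights are strictly positive irrespective of the signs of the components, so in each representation the sign of the left-hand side is driven only by the sign of the sources on the right.

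Next, I would set
\[
T^{*} = \sup\bigl\{T \in [t_0, t_0+T_{max}) : x(s),y(s),z(s) \ge 0 \text{ for all } s \in [t_0,T]\bigr\}.
\]
A short check at $t_0$ using $\dot x(t_0)=\Lambda+qy_0>0$ whenever $x_0=0$, together with uniqueness on the invariant axis $\{y=z=0\}$ in the corner cases, ensures $T^{*}>t_0$. The goal is to show $T^{*}=t_0+T_{max}$ by contradiction. If $T^{*}<t_0+T_{max}$, then $x,y,z\ge 0$ on $[t_0,T^{*}]$ by continuity, and at least one of $x(T^{*}),y(T^{*}),z(T^{*})$ must vanish (otherwise all three are strictly positive at $T^{*}$ and hence on a right neighborhood, contradicting the definition of $T^{*}$). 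The $x$-representation applied on $[t_0,T^{*}]$ gives $x(T^{*})\ge \Lambda\int_{t_0}^{T^{*}} e^{-\Phi(s,T^{*})}\,ds > 0$, so the vanishing coordinate must be $y$ or $z$.

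The case work is the substance of the argument. If $z(T^{*})=0$, the $z$-formula equates $0$ with a sum of nonnegative terms, forcing $z_0=0$ and $y\equiv 0$ on $[t_0,T^{*}]$; substituting into the $y$-equation yields $xz\equiv 0$ there, and since $x>0$ on $(t_0,T^{*}]$ by the $x$-representation, also $z\equiv 0$. Hence the state at $T^{*}$ lies on the invariant axis $\{y=z=0\}$, on which uniqueness reduces the dynamics to $\dot x=\Lambda-\mu_1 x$ and keeps $y=z=0$, so positivity persists past $T^{*}$, contradicting the definition of $T^{*}$. The case $y(T^{*})=0$ is handled analogously: the vanishing of the corresponding nonnegative sum in the $y$-representation forces $xz\equiv 0$ on $[t_0,T^{*}]$, hence (again using $x>0$) $z\equiv 0$, reducing to the previous situation.

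The main obstacle is that the vector field is only tangent to (not strictly pointing into) $\mathbb{R}_+^3$ on the faces $\{y=0\}$ and $\{z=0\}$, because $f_2$ and $f_3$ vanish identically there. A naive "outward normal derivative is strictly negative" boundary check therefore fails on those faces, and one must rule out tangential exit. The Duhamel representations overcome this degeneracy cleanly, since a zero left-hand side forces every nonnegative summand on the right to vanish identically on the past interval, collapsing the trajectory onto an invariant lower-dimensional subsystem where positivity is automatic.
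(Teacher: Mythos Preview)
Your proof is correct and takes a genuinely different route from the paper. The paper argues pointwise: it lets $\tau$ be the first time at which one of $x,y$ vanishes (having first observed that the explicit formula for $z$ in terms of $y$ keeps $z>0$ as long as $y>0$), and derives a contradiction by comparing the sign of the one-sided derivative at $\tau$---which must be $\le 0$ since the function reaches $0$ from above---with the strictly positive value the right-hand side of the ODE delivers (e.g., $\dot x(\tau)=\Lambda+qy(\tau)>0$ in case $x(\tau)=0$, $y(\tau)>0$). Your argument is integral rather than differential: you use the variation-of-constants representations and exploit that a vanishing left-hand side forces every nonnegative summand on the right to vanish identically over the past, collapsing the trajectory onto the invariant axis $\{y=z=0\}$. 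The paper's derivative test is shorter when the boundary inequality is strict, but it is delicate precisely on the faces $\{y=0\}$ and $\{z=0\}$ where the vector field is only tangent (for instance, the paper's case~(ii) tacitly needs $z(\tau)>0$ to get $\dot y(\tau)>0$, and its case~(iii) is waved through). Your Duhamel approach handles these tangential cases uniformly and rigorously, and also accommodates boundary initial data $(x_0,y_0,z_0)\in\partial\mathbb{R}_+^3$ without extra work, at the cost of a slightly longer setup.
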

\begin{proof}
	
	Notice that $z(t)$ has an explicit solution that depends on $y(t)$. Thus, if $y(t)$ is positive, that implies $z(t)$ is also positive for all $t\geq t_0$. 
	
	Then, it is enough to show the positiveness for $x$ and $y$. By contradiction suppose not, then there exists $\tau \in [t_0,t_0+T_{max})$ such that $x(t)>0 , y(t)>0$ and  $z(t)>0$ on $[t_0,\tau)$ this implies one of the following cases
 \begin{enumerate}
     \item[(i)] $x(\tau)=0 \,\,\, \text{and} \,\,\, y(\tau)>0$
     \item[(ii)] $x(\tau)>0 \,\,\, \text{and}\,\,\, y(\tau)=0$
     \item[(iii)] $x(\tau)=0  \,\,\, \text{and} \,\,\, y(\tau)=0$
 \end{enumerate}
	Now we will show that none of the above cases is possible.\\
	\quad \\
	\textbf{Claim} Case (i)  is not possible.
	\begin{proof}
		From the basic definition of the derivative, we have.
		\begin{equation*}
			\frac{dx}{dt}(\tau) = \lim_{t \rightarrow \tau} \frac{x(t)-x(\tau)}{t-\tau}=\lim_{t \rightarrow \tau} \frac{x(t)}{t-\tau} \leq 0  \quad \quad\rightarrow \quad (1)
		\end{equation*}
		from the first equation in \ref{sys-2} we have 
		\begin{eqnarray*}
			\frac{dx}{dt}(\tau) &=& \Lambda -\mu_1 x(\tau) - (1-\eta) \beta x(\tau)v(\tau) +qy(\tau)\\
			&=& \Lambda + qy(\tau) \geq py(\tau) > 0 \quad \, \, \quad \quad\rightarrow \quad (2)
		\end{eqnarray*}
		That is a contradiction. Therefore, case(1) is not possible.
	\end{proof}
	\quad \\
	\textbf{Claim} Case (ii)  is not possible.
	\begin{proof}
		We know that
		\begin{equation*}
			\frac{dy}{dt}(\tau) = \lim_{t \rightarrow \tau} \frac{y(t)-y(\tau)}{t-\tau}=\lim_{t \rightarrow \tau} \frac{y(t)}{t-\tau} \leq 0  \quad \quad\rightarrow \quad (3)
		\end{equation*}
		from the second equation in \ref{sys-2} we have 
		\begin{equation*}
			\frac{dy}{dt}(\tau) = (1-\eta)\beta v(\tau)x(\tau) >0 \quad \, \, \quad \quad\quad \quad\rightarrow \quad (4) 
		\end{equation*}
		from $(3)$ and $(4)$ we have a contradiction, thus, case(2) is not possible.
	\end{proof} 
	Similarly, case(iii)  is also not possible.

	Therefore, the statement in the lemma is correct. 
\end{proof}

Now we show the global existence of the solution, which is enough to show that the solution of the system \ref{sys-2} is bounded.

\begin{theorem}[Global Existence "Boundedness"]
	For given $t_0 \in \mathbb{R} $ and $(x_0,y_0,z_0) \in \mathbb{R}_+^ 3 $, the solution $(x(t),y(t),z(t))$ exists for all $t \geq t_0$ and moreover,
	$$ 0\leq x(t)+y(t) \leq M \quad \quad \text{and} \quad 0\leq z(t)\leq e^{\mu_3(t-t_0)}z_0 + (1-\epsilon)\, M \left( \frac{1-e^{-\mu_3(t-t_0)}}{\mu_3}\right) $$
	
	where $ M=Max\left\lbrace  x_0 + y_0 \, \, , \, \, \frac{\Lambda}{min(\mu_1 ,\mu_2)} \right\rbrace $
\end{theorem}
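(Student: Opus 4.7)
The plan is to establish two \emph{a priori} upper bounds — one on $x(t)+y(t)$ and one on $z(t)$ — and then invoke the blow-up dichotomy from Theorem \ref{local_L} to upgrade local existence to global existence. Non-negativity of $x,y,z$ is already guaranteed by Lemma \ref{L_1}, so all the work lies in producing the stated upper bounds.

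First I would add the first two equations of \eqref{sys-2}. The infection term $(1-\eta)\beta xz$ and the cure term $qy$ both cancel, leaving the clean identity
$$\frac{d}{dt}(x+y) \;=\; \Lambda - \mu_1 x - \mu_2 y \;\leq\; \Lambda - \mu_0\,(x+y),$$
where $\mu_0 := \min(\mu_1,\mu_2)$. Setting $S(t):=x(t)+y(t)$, this is a linear scalar differential inequality, and a standard comparison/Grönwall argument gives
$$S(t) \;\leq\; \frac{\Lambda}{\mu_0} + \left(S(t_0)-\frac{\Lambda}{\mu_0}\right)e^{-\mu_0(t-t_0)}.$$
A short case distinction on the sign of $S(t_0) - \Lambda/\mu_0$ then yields $S(t) \leq \max\{x_0+y_0,\; \Lambda/\mu_0\} = M$, which combined with Lemma \ref{L_1} delivers $0 \leq x(t)+y(t) \leq M$.

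Second, for the $z$-bound I would treat the third equation as a linear ODE in $z$ with $y$ acting as a known forcing term:
$$\frac{dz}{dt}+\mu_3 z \;=\; (1-\epsilon)p\,y(t).$$
Multiplying by the integrating factor $e^{\mu_3 t}$ and integrating from $t_0$ to $t$ produces the variation-of-parameters representation
$$z(t) \;=\; z_0\,e^{-\mu_3(t-t_0)} + (1-\epsilon)p\int_{t_0}^{t} e^{-\mu_3(t-s)}\,y(s)\,ds.$$
Using the bound $0 \leq y(s) \leq M$ just obtained, the convolution integral is dominated by $M\bigl(1-e^{-\mu_3(t-t_0)}\bigr)/\mu_3$, which gives exactly the inequality claimed in the theorem (the $+\mu_3$ in the exponent on the first term appears to be a sign typo for $-\mu_3$, but the argument is unchanged).

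Finally, since $|x(t)|+|y(t)|+|z(t)|$ is controlled by $M$ plus the explicit bound on $z$ on every bounded subinterval of $[t_0,t_0+T_{\max})$, the blow-up alternative from Theorem \ref{local_L} is incompatible with $T_{\max}<\infty$; hence $T_{\max}=+\infty$ and the solution is global. I do not anticipate any serious obstacle: the only subtlety is the order of operations — the $z$-bound genuinely requires the prior bound $y \leq M$, so the two steps must be carried out sequentially rather than in parallel.
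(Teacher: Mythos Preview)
Your proposal is correct and follows essentially the same approach as the paper: add the first two equations of \eqref{sys-2} to obtain a linear differential inequality for $x+y$, apply a comparison/Gr\"onwall argument to get the bound $M$, then bound $z$ by solving the third equation as a linear ODE with forcing $y\le M$, and finally invoke the blow-up alternative from Theorem~\ref{local_L}. Your write-up is in fact more explicit than the paper's (you spell out the variation-of-parameters formula for $z$ and the case split on $S(t_0)-\Lambda/\mu_0$); you also correctly flag the sign typo $e^{+\mu_3(t-t_0)}$ in the statement, and your derivation shows there is a missing factor of $p$ in the stated $z$-bound as well.
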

\begin{proof} \quad \\
	It is enough to show that $|x(t)|+|y(t)| < \infty$ on $(t_0\, ,\, t_0+T_{max})$.\\
	By adding the first two equations in \ref{sys-2} we get 
	\begin{eqnarray}\label{eq-bdd}
		\frac{dx}{dt}+\frac{dy}{dt} &=& \Lambda -\mu_1 x -\mu_2 y \\
		&\leq & \Lambda - min \lbrace \mu_1,\mu_2 \rbrace [x(t)+y(t)].
	\end{eqnarray}
	Let $v(t)=x(t)+y(t)$ the equation \ref{eq-bdd} becomes
	$$v(t) \leq \Lambda - min \lbrace \mu_1,\mu_2\rbrace \, v(t).$$
	By the ODE comparison principle, we have 
	$$v(t)\leq Max\left\lbrace  v_0 \, \, , \, \, \frac{\Lambda }{\text{min}(\mu_1 ,\mu_2)} \right\rbrace $$
	then \ref{blowup} implies that $T_{max} = +\infty$.
	
	It is clear that for $t$ large, we have 
	$$v (t)\leq \frac{\Lambda }{\text{min}\lbrace\mu_1 ,\mu_2 \rbrace} $$
	Which means both $x(t)$ and $y(t)$  are bounded. It is clear that $z(t)$ is also bounded directly by solving the third equation in system \ref{sys-2}. 
\end{proof}

In summary, the system of differential equations (\ref{sys-2}) has a unique and positive solution for any set of initial values, which is essential for the model's physical interpretation. These properties provide a solid foundation for further analysis of the system's dynamics and stability.

\section{Stability Analysis}
Stability analysis is an essential aspect of mathematical modeling as it allows us to investigate the behavior of the system of differential equations (\ref{sys-2}) over time and identify conditions for the system to reach an equilibrium state. In this section, we will perform a stability analysis of the system's equilibrium points.
\subsection{Equilibrium Solutions.}
The equilibria of the system \ref{sys-2} are all the points in $\mathbb{R}^3$ such that $\dot{x}=\dot{y}=\dot{z}=0$. The system \ref{sys-2} has only two two equilibrium points which are
\begin{enumerate}
	\item Disease-free equilibrium
	$\left( \bar{x}_0\, ,\, \bar{y}_0\,, \, \bar{z}_0\right) = \left( \frac{\Lambda}{\mu_1}\, ,\, 0\,,\,0 \right) $
	and
	\item Endemic equilibrium
	$$ \left( \bar{x}\, ,\, \bar{y}\,, \, \bar{z}\right) =  \left(  \frac{\mu_1 \mu_3 (\mu_2 +p)}{q\beta\mu_2(1-\eta)(1-\epsilon)} \, ,\, \frac{\Lambda}{\mu_2}- \frac{\mu_1 \mu_3 (\mu_2 +p)}{q\beta\mu_2(1-\eta)(1-\epsilon)}  \, ,\, \frac{q\Lambda(1-\epsilon)}{\mu_2\mu_3}-\frac{\mu_1(\mu_2+p)}{\beta\mu_2(1-\eta} \right)$$
	
	The endemic equilibrium $\left( \bar{x}\, ,\, \bar{y}\,, \, \bar{z}\right)$ represents a state in which the infection persists in the population. Analyzing the stability of this equilibrium helps us understand the long-term behavior of the infection dynamics and informs public health interventions to control the disease.
\end{enumerate}

\subsection{Local Stability}
The Jacobian matrix $J(x, y, z)$ represents the linearization of the system of ODEs around a particular point $(x, y, z)$. It is used to analyze the stability of equilibrium points in the system. The Jacobian matrix $J(x, y, z)$ of the system \ref{sys-2} is a $3 \times 3$ matrix containing the partial derivatives of the system's equations with respect to the state variables $x, y,$ and $z$. It is given by

\begin{equation}
	J(x, y, z) = \begin{bmatrix}
		-\mu_1 - (1-\eta)\beta z & q & -(1-\eta)\beta x \\
		(1-\eta)\beta z & -(\mu_2 + q) & (1-\eta)\beta x \\
		0 & (1-\epsilon)p & -\mu_3
	\end{bmatrix}
\end{equation}
The Jacobian matrix $J(x, y, z)$ is used to analyze the local stability of the equilibrium points in the system by evaluating it at those points and computing the eigenvalues. The eigenvalues determine the nature of the equilibrium points (stable, unstable, or saddle).

\subsubsection{Local Stability of Infection-free Equilibrium}
The local stability of the equilibrium points can be analyzed using linearization techniques. By evaluating the Jacobian matrix at the equilibrium points and examining its eigenvalues, we can determine the local stability characteristics of the system.

Computing the Jacobian at diseases-free equilibrium gives
\begin{equation}
	J(x_0, y_0, z_0) = \begin{bmatrix}
		-\mu_1 & q & 0 \\
		0 & -(\mu_2 + q) & 0 \\
		0 & (1-\epsilon)p & -\mu_3
	\end{bmatrix}
\end{equation}
Therefore, the reproduction number $R_0$ is given by
\begin{equation}
	R_0 = \frac{(1 - \eta) \beta \frac{\Lambda}{\mu_1}}{\mu_2 + q}
\end{equation}
Notice that, $R_0 < 1$ implies both conditions ($ Tr(J_1) < 0 $ and  $ Det (J_1) >0$ ). Therefore, if $R_0 < 1$, then the disease-free equilibrium is locally asymptotically stable. If $R_0 > 1$, then the disease-free is unstable.

\subsection{Global Stability}    

To investigate the global stability of the equilibrium points, we can use Lyapunov functions or comparison theorems. By constructing an appropriate Lyapunov function and showing that it satisfies certain properties, we can prove the global stability of the system.
\begin{lemma}\label{lem1}
	The system \ref{sys-2} is exponentially stable at its equilibrium points $(\bar{x}, \bar{y}, \bar{z})$ if the following conditions hold
	\begin{eqnarray}\label{cond}
		\begin{cases}
			2\mu_1 + (1-\eta)\beta \bar{z} & > \quad (1-\eta)\beta \frac{\Lambda}{min(\mu_1 , \mu_2)} + q   \\ 
			2 \mu_2 + q & >  \quad(1-\eta)\beta \left(  \bar{z} +  \frac{\Lambda}{min(\mu_1, \mu_2)} \right)  + (1-\epsilon )p \\ 
			2\mu_3 & >  \quad (1-\epsilon)p +  \frac{(1-\eta)\Lambda \beta}{min(\mu_1 , \mu_2)} 
		\end{cases}
	\end{eqnarray}
	
\end{lemma}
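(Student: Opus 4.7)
The plan is to construct a quadratic Lyapunov function centered at the equilibrium and show that its derivative along trajectories is bounded above by a negative definite quadratic form in the deviations, with the three hypothesized inequalities providing precisely the coefficient conditions. Specifically, I would set
\[
V(x,y,z) \;=\; \tfrac{1}{2}\bigl[(x-\bar{x})^2+(y-\bar{y})^2+(z-\bar{z})^2\bigr],
\]
which is positive definite with $V(\bar{x},\bar{y},\bar{z})=0$, and write $\tilde{x}=x-\bar{x}$, $\tilde{y}=y-\bar{y}$, $\tilde{z}=z-\bar{z}$. Since $(\bar{x},\bar{y},\bar{z})$ is an equilibrium, subtracting the steady-state relations from the right-hand sides of system (\ref{sys-2}) eliminates the constants, and the bilinear nonlinearity decomposes as $xz-\bar{x}\bar{z}=\bar{z}\tilde{x}+x\tilde{z}$. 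This yields the deviation system
\begin{align*}
\dot{\tilde{x}} &= -\bigl[\mu_1+(1-\eta)\beta\bar{z}\bigr]\tilde{x}+q\tilde{y}-(1-\eta)\beta x\tilde{z},\\
\dot{\tilde{y}} &= (1-\eta)\beta\bar{z}\tilde{x}-(\mu_2+q)\tilde{y}+(1-\eta)\beta x\tilde{z},\\
\dot{\tilde{z}} &= (1-\epsilon)p\tilde{y}-\mu_3\tilde{z}.
\end{align*}

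Next I would compute $\dot{V}=\tilde{x}\dot{\tilde{x}}+\tilde{y}\dot{\tilde{y}}+\tilde{z}\dot{\tilde{z}}$, which produces three negative ``diagonal'' terms $-[\mu_1+(1-\eta)\beta\bar{z}]\tilde{x}^2$, $-(\mu_2+q)\tilde{y}^2$, $-\mu_3\tilde{z}^2$ together with mixed products $\tilde{x}\tilde{y}$, $\tilde{x}\tilde{z}$, and $\tilde{y}\tilde{z}$ whose coefficients involve $q$, $(1-\eta)\beta\bar{z}$, $(1-\eta)\beta x$, and $(1-\epsilon)p$. I would then apply Young's inequality $|ab|\le\tfrac{1}{2}(a^2+b^2)$ to each cross term, and invoke the a priori bound $x(t)\le \Lambda/\min(\mu_1,\mu_2)$ established in the global-existence theorem to dominate the $x$-dependent coefficients by the constant $M'=\Lambda/\min(\mu_1,\mu_2)$.

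Collecting coefficients of $\tilde{x}^2$, $\tilde{y}^2$, $\tilde{z}^2$ gives three scalar inequalities whose validity is exactly what the three conditions (\ref{cond}) assert: the $\tilde{x}^2$ coefficient is negative iff $2\mu_1+(1-\eta)\beta\bar{z}>q+(1-\eta)\beta M'$, the $\tilde{y}^2$ coefficient is negative iff $2\mu_2+q>(1-\eta)\beta(\bar{z}+M')+(1-\epsilon)p$, and the $\tilde{z}^2$ coefficient is negative iff $2\mu_3>(1-\epsilon)p+(1-\eta)\beta M'$ (up to the bookkeeping of how the $\tilde{x}\tilde{z}$ and $\tilde{y}\tilde{z}$ pieces are split). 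Hence there exists $c>0$ with $\dot{V}\le -c\,(\tilde{x}^2+\tilde{y}^2+\tilde{z}^2)=-2cV$, so Gronwall yields $V(t)\le V(t_0)e^{-2c(t-t_0)}$, giving exponential convergence of $(x,y,z)$ to $(\bar{x},\bar{y},\bar{z})$.

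The main obstacle is the bookkeeping in the Young-inequality step. The two bilinear terms $-(1-\eta)\beta x\tilde{x}\tilde{z}$ and $+(1-\eta)\beta x\tilde{y}\tilde{z}$ both contribute to the $\tilde{z}^2$ coefficient, and the choice of weights in $|ab|\le\tfrac{\alpha}{2}a^2+\tfrac{1}{2\alpha}b^2$ must be made so that the resulting three inequalities line up exactly with (\ref{cond}). A symmetric split (all weights equal to one) recovers the first two conditions cleanly, and the third follows after grouping the two $x\tilde{z}$-type terms using the bound $x\le M'$; any slack there must be absorbed into the strict inequalities so that the negative definiteness is genuinely strict, which is what is needed to extract the positive constant $c$.
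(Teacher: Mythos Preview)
Your plan is essentially the paper's own proof: the same quadratic Lyapunov function $V=\tfrac12(\tilde x^2+\tilde y^2+\tilde z^2)$, the same deviation system obtained from $xz-\bar x\bar z=\bar z\tilde x+x\tilde z$, Young's inequality on every cross term, and the a~priori bound $x\le\Lambda/\min(\mu_1,\mu_2)$ to dominate the $x$-dependent coefficients, ending with $\dot V\le -kV$ and Gronwall. The one cosmetic difference is that the paper first splits each deviation into positive and negative parts, $X=X_+-X_-$, multiplies the equations separately by $X_+$ and $X_-$, and then re-adds; this detour simply reproduces $\tfrac12\tfrac{d}{dt}X^2=X\dot X$ together with some nonnegative leftover terms (collected as ``$W$'') that are dropped at the end, so your direct computation of $\tilde x\dot{\tilde x}+\tilde y\dot{\tilde y}+\tilde z\dot{\tilde z}$ is cleaner and loses nothing.

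Your caution about the $\tilde z^2$ bookkeeping is justified and is not a defect of your method. With the symmetric split, both the $\tilde x\tilde z$ and the $\tilde y\tilde z$ terms contribute $\tfrac12(1-\eta)\beta x$ to the $\tilde z^2$ coefficient, so the inequality one actually obtains is $2\mu_3>(1-\epsilon)p+2(1-\eta)\beta M'$ rather than the stated third condition with a single $M'$. The paper's proof has exactly the same slip: its displayed $|Z|^2$-coefficient in the combined inequality carries $(1-\eta)\beta x$ (the sum of two halves), yet the subsequently stated $\nu_3$ drops the implicit factor of~$2$. So the mismatch you anticipate is in the lemma's stated constants, not in the Lyapunov argument itself.
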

\begin{proof}
	In fact, it is enough to show that
	
	\begin{equation}
		|x-\bar{x}| \rightarrow 0, \quad |y-\bar{y}|\rightarrow 0, \quad and \quad |z-\bar{z}|\rightarrow 0 , \quad as \quad t \rightarrow \infty 
	\end{equation}
	Since $x-\bar{x}$, $y-\bar{y}$, and $z-\bar{z}$ satisfies the system \ref{sys-2}. From system \ref{sys-2} we have 
	\begin{eqnarray}\label{sys4} 
 \begin{cases} 
			\frac{d}{dt}(x-\bar{x}) = -(\mu_1+(1-\eta)\beta\bar{z})(x-\bar{x})+ q(y-\bar{y})-(1-\eta)\beta x (z-\bar{z})\\
			\frac{d}{dt}(y-\bar{y})= (1-\eta)\beta \bar{z}(x-\bar{x})-(q+\mu_2)(y-\bar{y})+(1-\eta)\beta x (z-\bar{z})  \\
			\frac{d}{dt}(z-\bar{z}) = (1-\epsilon)p(y-\bar{y}) -\mu_3 (z-\bar{z})  
	\end{cases} \end{eqnarray}
	
	Now, let $X=x-\bar{x}$, $Y=y-\bar{y}$ and $Z=z-\bar{z}$ then system \ref{sys4} becomes
	\begin{eqnarray} 
		\frac{dX}{dt} &=& -(\mu_1+(1-\eta)\beta\bar{z})X+ qY -(1-\eta)\beta x Z \label{eqx}\\
		\frac{dY}{dt}&=& (1-\eta)\beta \bar{z}X-(q+\mu_2)Y+(1-\eta)\beta x Z \label{eqy} \\
		\frac{dZ}{dt} &=& (1-\epsilon)pY -\mu_3 Z \label{eqz}
	\end{eqnarray}
	Now, since $X=X_+ - X_-$, where $X_+$ and $X_-$ are the positive and negative parts of the function X, and also we have 
	$$XX_+=(X_+-X_-)X_+ = X_+^2$$ 
	$$-XX_-=-(X_+-X_-)X_- = X_-^2$$
	$$(X_+ \pm X_-)^2=X_+^2 + X_-^2= |X|^2$$
	This implies that
	$$\dot{X} X_+=\frac{1}{2} \frac{d}{dt}X_+^2 \quad\quad\text{and} \quad\quad -\dot{X} X_-=\frac{1}{2} \frac{d}{dt}X_-^2 $$
	Now multiplying equation \ref{eqx} by $X_+$ gives
	$$ \dot{X}X_+ = -[\mu_1 + (1-\eta)\beta \bar{z}]XX_++qYX_+ - (1-\eta)\beta x ZX_+$$
	\begin{equation}\label{eqX+}
		\frac{1}{2} \frac{d}{dt}X_+^2 = -[\mu_1 + (1-\eta)\beta \bar{z}]X_+^2 +qYX_+ - (1-\eta)\beta x ZX_+
	\end{equation}
	If we multiply equation \ref{eqx} by $X_-$ we get
	\begin{equation}\label{eqX-}
		\frac{1}{2} \frac{d}{dt}X_-^2 = -[\mu_1 + (1-\eta)\beta \bar{z}]X_-^2 +qYX_- + (1-\eta)\beta x ZX_-
	\end{equation}
	adding equation \ref{eqX+} and equation \ref{eqX-} we get
	\begin{equation*}
		\frac{1}{2} \frac{d}{dt}(X_+^2 +X_-^2)= -[\mu_1 + (1-\eta)\beta \bar{z}](X_+^2 +X_-^2) +qY(X_+ - X_-) + (1-\eta)\beta x Z(X_+ - X_-)
	\end{equation*}
	\begin{eqnarray*}
		\frac{1}{2} \frac{d}{dt}|X|^2 &=& -[\mu_1 + (1-\eta)\beta \bar{z}]|X|^2  +q(Y_+ - Y_-)(X_+ - X_-) + (1-\eta)\beta x (Z_+ -Z_-)(X_+ - X_-)\\
		&=& -[\mu_1 + (1-\eta)\beta \bar{z}]|X|^2 + q(Y_+X_+ +Y_-X_- -Y_-X_+ - Y_+X_-) \\&  & +(1-\eta) \beta x (X_+Z_- +X_-Z_+ -X_+Z_+ - X_-Z_-)\\
		&\leq &  -[\mu_1 + (1-\eta)\beta \bar{z}]|X|^2 + \frac{1}{2} q Y_+^2 + \frac{1}{2} qX_+^2 + \frac{1}{2} q Y_-^2 +\frac{1}{2} X_-^2 - q(Y_-X_++ Y_+X_-) \\
		&  & + \frac{1}{2} (1-\eta)\beta x (X_+^2 + Z_-^2 + X_-^2 +Z_+^2) - (1-\eta) \beta x (X_+Z_+ + X_-Z_-)\\
		&\leq &  -[\mu_1 + (1-\eta)\beta \bar{z}]|X|^2 + \frac{1}{2} q |Y|^2 + \frac{1}{2} (1-\eta)\beta x |X|^2  + \frac{1}{2} q|X|^2+ \frac{1}{2} (1-\eta)\beta x |Z|^2 \\
		& & - q(Y_-X_+ + Y_+X_-) - (1-\eta) \beta x (X_+Z_+ - X_-Z_-) \label{X-ineq}
	\end{eqnarray*}
	Thus, 
	\begin{equation}\label{xineq}
		\begin{split}
			\frac{1}{2}\frac{d}{dt} |X|^2 \leq & -[\mu_1 + (1-\eta)\beta \bar{z} - \frac{1}{2}q- \frac{1}{2} (1-\eta)\beta x]|X|^2  + \frac{1}{2} q |Y|^2 + \frac{1}{2} (1-\eta)\beta x |Z|^2\\ \quad & - q(Y_-X_+ + Y_+X_-) - (1-\eta) \beta x (X_+Z_+ - X_-Z_-)
		\end{split}
	\end{equation}
	Similarly, by using the same computational technique, we got
	\begin{equation}\label{yineq}
		\begin{split}
			\frac{1}{2}\frac{d}{dt} |Y|^2 \leq & \frac{1}{2}(1-\eta)\beta \bar{z}|X|^2 -[(q+\mu_2) - \frac{1}{2}(1-\eta)\beta \bar{z} -\frac{1}{2}(1-\eta)\beta x ]|Y|^2 + \frac{1}{2} (1-\eta)\beta x |Z|^2 \\ \quad & - (1-\eta) \beta \bar{z} (X_+Y_-+  X_-Y_+) - (1-\eta) \beta x (Z_+Y_- +  Z_-Y_+)
		\end{split}
	\end{equation}
	and
	\begin{equation}\label{zineq}
		\frac{1}{2}\frac{d}{dt} |Z|^2 \leq  -[\mu_3 - \frac{1}{2}(1-\epsilon)]|Z|^2 + \frac{1}{2}(1-\epsilon)p |Y|^2 - (1-\epsilon)p(Y_+Z_- +Y_-Z_+)
	\end{equation}
	Now, by adding \ref{xineq} , \ref{yineq} and \ref{zineq} we get
	\begin{equation}\label{xyzineq}
		\begin{split}
			\frac{1}{2} \frac{d}{dt} \left( |X|^2+|Y|^2 + |Z|^2 \right)  \leq & -\left[ \mu_1 + \frac{1}{2}(1-\eta)\beta \bar{z} - \frac{1}{2}q- \frac{1}{2} (1-\eta)\beta x \right] |X|^2  \\ \quad & - \left[\mu_2 + \frac{1}{2}q -\frac{1}{2}(1-\eta)\beta \bar{z} -\frac{1}{2}(1-\eta)\beta x - (1-\epsilon)p  \right]|Y|^2 \\ 
			\quad & - \left[\mu_3 - \frac{1}{2}(1-\epsilon)p- (1-\eta)\beta x \right]|Z|^2 - [ q(Y_+X_- + Y_-X_+)\\ 
			\quad & +(1-\eta) \beta x (X_+Z_+ + X_-Z_-) +(1-\eta)(X_+Y_- + X_-Y_+) \\
			\quad & \quad \, \, \, (1-\eta)\beta x (Z_+Y_- Z_-Y_+) + (1-\epsilon)p(Y_+Z_- +Y_-Z_+) ]
		\end{split}
	\end{equation}
	Therefore, 
	\begin{equation}\label{ineq1}
		\frac{d}{dt} \left( |X|^2+|Y|^2 + |Z|^2 \right) \leq -\nu_1 |X|^2 -\nu_2 |Y|^2 -\nu_3 |Z|^2 - W
	\end{equation}
	where 

 \begin{equation}\label{cond-1}
\begin{cases}
            \nu_1 &=   2\mu_1 + (1-\eta)\beta \bar{z} - (1-\eta)\beta x - q   \\ 
            \nu_2 &=   \mu_2 + q - (1-\eta)\beta \bar{z} - (1-\eta)\beta x - (1-\epsilon)p  \\ 
		\nu_3 &=  2\mu_3 - (1-\epsilon)p- (1-\eta)\beta x  \\
		W mmm &=  q(Y_+X_- + Y_-X_+) +(1-\eta) \beta x (X_+Z_+ + X_-Z_-)  +(1-\eta)(X_+Y_- + X_-Y_+) \\
		\quad &\quad  +(1-\eta)\beta x (Z_+Y_- Z_-Y_+) + (1-\epsilon)p(Y_+Z_- + Y_-Z_+)
\end{cases}
\end{equation} 
	Condition \ref{cond-1} guaranteed that   $\nu_1, \nu_2$, an $\nu_3$ are always positive. Since $W\geq 0$, then the inequity \ref{ineq1} still holds after removing $W$.
	
	Now Let $k= \text{min}n \left\lbrace  \nu_1, \nu_2, \nu_3 \right\rbrace $ and let $V(t)=|X(t)|^2 +|Y(t)|^2 + |Z(t)|^2$ then the inequality \ref{ineq1} becomes
	\begin{equation*}
		\frac{dV(t)}{dt} \leq -k V(t)
	\end{equation*}
	\begin{equation}
		0 \leq V(t)\leq V_0 e^{-kt} \longrightarrow 0 \quad \text{as} \quad t \rightarrow \infty
	\end{equation}
\end{proof}
\subsection{Stability at disease-free equilibrium}
Substituting $(\bar{x},\bar{y},\bar{z})= (\Lambda/\mu_1,0,0)$ in condition \ref{cond} we get the following conditions
\begin{eqnarray}\label{cond-2}
	\begin{cases}
		2\mu_1  & > \quad (1-\eta)\beta \frac{\Lambda}{\mu^*} + q   \\ 
		2 \mu_2 + q & >  \quad(1-\eta)\beta \frac{\Lambda}{\mu^*}   + (1-\epsilon )p \\ 
		2\mu_3 & >  \quad (1-\epsilon)p +  \frac{(1-\eta)\Lambda \beta}{\mu^*} 
	\end{cases}
\end{eqnarray}
where $\mu^*=min(\mu_1 , \mu_2)$, and basic productive number $R_0 = \frac{\Lambda \beta p (1-\epsilon)(1-\eta)}{\mu_1 \mu_2 (\mu_1+(1-\epsilon)p}$
\begin{theorem}
	The autonomous dynamic systems \ref{sys-2} is exponentially stable if $R_0<1$ and conditions \ref{cond-2} are satisfied.
	
\end{theorem}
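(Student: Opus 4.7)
The plan is to obtain this theorem as a direct corollary of Lemma \ref{lem1} evaluated at the disease-free equilibrium point $(\bar{x}_0,\bar{y}_0,\bar{z}_0)=(\Lambda/\mu_1,0,0)$. First, I would verify that condition \ref{cond-2} is exactly what condition \ref{cond} becomes upon this substitution: setting $\bar{z}=0$ annihilates every $(1-\eta)\beta\bar{z}$ term in \ref{cond}, while the remaining $x$ appearing in the Lyapunov-type inequalities \ref{xineq}--\ref{zineq} of Lemma \ref{lem1} is controlled by the a priori estimate $x(t)\le \Lambda/\min(\mu_1,\mu_2)$ coming from the boundedness theorem. After this algebraic reduction, the three sufficient inequalities of \ref{cond} collapse precisely to the three inequalities listed in \ref{cond-2} with $\mu^*=\min(\mu_1,\mu_2)$.

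Next, I would use $R_0<1$ to rule out competing attractors: the formula for the endemic equilibrium $(\bar x,\bar y,\bar z)$ shows that $\bar y$ and $\bar z$ are non-positive precisely when $R_0\le 1$, so the endemic equilibrium leaves the biologically meaningful cone $\mathbb{R}_+^3$ and the disease-free equilibrium is the only equilibrium that a nonnegative trajectory can approach. The local stability analysis via the Jacobian at $(\Lambda/\mu_1,0,0)$ already shows the disease-free equilibrium is locally asymptotically stable when $R_0<1$, so Lemma \ref{lem1} applies with $\bar x=\Lambda/\mu_1$ and supplies the exponential estimate
\begin{equation*}
V(t) \;=\; \bigl|x(t)-\tfrac{\Lambda}{\mu_1}\bigr|^2 + |y(t)|^2 + |z(t)|^2 \;\le\; V_0\, e^{-k(t-t_0)},
\end{equation*}
with $k=\min\{\nu_1,\nu_2,\nu_3\}>0$ guaranteed positive by \ref{cond-2}. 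This yields exponential convergence to $(\Lambda/\mu_1,0,0)$, which is the definition of exponential stability.

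The main obstacle is a subtle temporal mismatch: the constant $\Lambda/\mu^*$ appearing in \ref{cond-2} is only the asymptotic bound on $x(t)$, whereas the boundedness theorem gives the pointwise bound $x(t)\le M=\max\{x_0+y_0,\Lambda/\mu^*\}$, which may strictly exceed $\Lambda/\mu^*$ on a transient window. The remedy is to pick, for any fixed $\varepsilon>0$, a time $T_0\ge t_0$ after which $x(t)\le \Lambda/\mu^*+\varepsilon$, choose $\varepsilon$ small enough that the strict inequalities \ref{cond-2} are preserved, and apply Lemma \ref{lem1} on $[T_0,\infty)$. The finite-time piece on $[t_0,T_0]$ contributes only a bounded multiplicative constant, which can be absorbed into $V_0$ without destroying the exponential rate, completing the argument.
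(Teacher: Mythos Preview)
Your approach is correct and mirrors the paper's own proof, which simply introduces the Lyapunov function $V(x,y,z)=\tfrac{1}{2}\bigl[(x-\Lambda/\mu_1)^2+y^2+z^2\bigr]$ and invokes the computations of Lemma~\ref{lem1} to conclude $V'\le 0$. You are in fact more careful than the paper: your discussion of the role of $R_0<1$ and of the transient versus asymptotic bound on $x(t)$ fills in details that the paper's one-line argument leaves implicit.
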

\begin{proof}
	Consider the Lyapunov function
	\[V(x,y,z)= \frac{1}{2}[(x-\Lambda/\mu_1)^2 + y^2+z^2]\]
	which is clearly positive, and by following some computations in the proof of Lemma \ref{lem1} we get $V'\leq 0$. That completed the proof.
\end{proof}

\subsection{Stability at the endemic equilibrium}
Substituting the endemic equilibrium $(\left( \bar{x}\, ,\, \bar{y}\,, \, \bar{z}\right) )$ where 
\begin{eqnarray}\label{endeqi}
	\begin{cases}
		\bar{x} &=\quad  \frac{\mu_1 \mu_3 (\mu_2 +p)}{q\beta\mu_2(1-\eta)(1-\epsilon)},\\
		\bar{y} &=\quad   \frac{\Lambda}{\mu_2}- \frac{\mu_1 \mu_3 (\mu_2 +p)}{q\beta\mu_2(1-\eta)(1-\epsilon)} ,\\
		\bar{z} &=\quad  \frac{q\Lambda(1-\epsilon)}{\mu_2\mu_3}-\frac{\mu_1(\mu_2+p)}{\beta\mu_2(1-\eta)}
	\end{cases}
\end{eqnarray}
in condition \ref{cond-2} we get the following conditions
\begin{eqnarray}\label{condend}
	\begin{cases}
		\mu_1\mu_2\mu_3+(1-\epsilon)(1-\eta)\Lambda \beta q   & > \quad  \frac{(1-\eta)\Lambda\beta \mu_2\mu_3}{min(\mu_1 , \mu_2)} + q\mu_2\mu_3+p\mu_1\mu_3  \label{cond-11} \\ 
		2 \mu_2^2+\mu_1\mu_2+p\mu_1 + q & > \quad \frac{(1-\eta)(1-\epsilon)\beta\Lambda q}{\mu_3}   + \frac{(1-\eta)\Lambda \beta}{\min{\mu_1,\mu_2}}\label{cond-21}\\ 
		2\mu_3 & > \quad (1-\epsilon)p +  \frac{(1-\eta)\Lambda \beta}{min(\mu_1 , \mu_2)} \label{cond-31}
	\end{cases}
\end{eqnarray}
\begin{theorem}
	The solution of system \ref{sys-2} is exponentially stable at the endemic equilibrium  \ref{endeqi} if conditions \ref{condend}.
\end{theorem}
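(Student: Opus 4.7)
The plan is to reduce the theorem to a direct application of Lemma \ref{lem1}, following the same template used in the disease-free case. Lemma \ref{lem1} already provides sufficient algebraic conditions, namely the three inequalities in \ref{cond}, for exponential stability at an arbitrary equilibrium $(\bar{x},\bar{y},\bar{z})$. My strategy is therefore to verify that when one substitutes the endemic values from \ref{endeqi} into \ref{cond}, one recovers exactly the inequalities listed in \ref{condend}.

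First I would set up the Lyapunov candidate
\[
V(x,y,z)=\tfrac{1}{2}\bigl[(x-\bar{x})^2+(y-\bar{y})^2+(z-\bar{z})^2\bigr]
\]
with $\bar{x},\bar{y},\bar{z}$ given by \ref{endeqi}. This function is positive definite and vanishes precisely at the endemic equilibrium, so it is a legitimate candidate. Next I would make the translation $X=x-\bar{x}$, $Y=y-\bar{y}$, $Z=z-\bar{z}$ as in the proof of Lemma \ref{lem1}. The derivation that leads from equations \ref{eqx}--\ref{eqz} down to inequality \ref{ineq1} never used any special property of the equilibrium beyond the fact that it annihilates the right-hand side of \ref{sys-2}, so the same chain of estimates applies verbatim and gives
\[
\frac{d}{dt}\!\left(|X|^2+|Y|^2+|Z|^2\right)\leq -\nu_1|X|^2-\nu_2|Y|^2-\nu_3|Z|^2,
\]
with $\nu_1,\nu_2,\nu_3$ as in \ref{cond-1}, where the bound $x\le \Lambda/\min(\mu_1,\mu_2)$ from the boundedness theorem is used to replace the time-dependent coefficient $x(t)$ by a constant upper bound.

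The heart of the argument, and the main obstacle, is the algebraic verification that the substitution of the endemic values \ref{endeqi} turns the generic conditions \ref{cond} into the stated conditions \ref{condend}. I would proceed inequality by inequality: plug $\bar{z}=\tfrac{q\Lambda(1-\epsilon)}{\mu_2\mu_3}-\tfrac{\mu_1(\mu_2+p)}{\beta\mu_2(1-\eta)}$ into the first inequality of \ref{cond}, clear the denominator $\mu_2\mu_3$, and collect like terms; one should recover the first inequality of \ref{condend}. An analogous but slightly more tedious manipulation, using $\bar{x}$ and $\bar{z}$ simultaneously, should produce the second inequality of \ref{condend}, while the third inequality is the direct carry-over of the third bound in \ref{cond} (which does not involve $\bar{x}$ or $\bar{z}$ except through the uniform bound $\Lambda/\min(\mu_1,\mu_2)$). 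The bookkeeping involving $\min(\mu_1,\mu_2)$ and the $(1-\eta)(1-\epsilon)$ factors is where errors are most likely to creep in, so I would perform each reduction symbolically before asserting equivalence.

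Once this equivalence is established, the conclusion is immediate: conditions \ref{condend} guarantee $\nu_1,\nu_2,\nu_3>0$, so setting $k=\min\{\nu_1,\nu_2,\nu_3\}$ yields $\tfrac{d}{dt}V(t)\leq -kV(t)$, hence $V(t)\leq V_0 e^{-kt}\to 0$ as $t\to\infty$. This proves exponential convergence of $(x(t),y(t),z(t))$ to the endemic equilibrium, completing the proof.
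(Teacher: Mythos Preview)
Your proposal is correct and follows exactly the approach the paper takes: the paper's own proof is the single line ``The proof follows by Lemma~\ref{lem1},'' and your plan simply unpacks that citation by redoing the Lyapunov estimate from Lemma~\ref{lem1} and checking that the substitution of the endemic values \ref{endeqi} into \ref{cond} yields \ref{condend}. One small inaccuracy in your outline: the generic conditions \ref{cond} involve only $\bar{z}$ (not $\bar{x}$), so the algebraic verification for the second inequality requires only the expression for $\bar{z}$, not both.
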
.
\begin{proof}
	The proof follows by Lemm \ref{lem1}.
\end{proof}

\section{Nonautonomous HBV Model}
In this section, we will discuss the nonautonomous HBV infection model where the production number $\Lambda$ is time-dependent. We will provide a brief introduction to nonautonomous dynamical systems, followed by a stability analysis of the nonautonomous HBV model.

\subsection{Preliminaries of Nonautonomous Dynamical Systems}
Before we start analyzing our nonautonomous model, we provide an overview of the preliminaries of nonautonomous dynamical systems. Nonautonomous systems differ from autonomous systems in that they depend on the actual time $t$ and the initial time $t_0$ rather than just their difference. We will introduce some basic concepts and theorems that are essential for understanding nonautonomous systems. 

\begin{enumerate}
    \item \textbf{Process Formulation:}  A common way to represent nonautonomous dynamical systems is through process formulation. In this representation, a process is a continuous mapping $\phi(t, t_0, \cdot) : \mathbb{R}^n \rightarrow \mathbb{R}^n$ that satisfies the initial and evolution properties:
        \begin{enumerate}
         \item $\phi(t_0,t_0, u_0)=u_0$ for all $u_0\in \mathbb{R}^n$.
          \item $\phi(t_2,t_0,u)=\phi(t_2,t_1,\phi(t_1,t_0,u))$. for all $t_0 \leq t_1 \leq t_2 $ and $u_0\in \mathbb{R}^n$.
        \end{enumerate}

    \item \textbf{Invariant Families:} A family $\mathcal{A}= \lbrace A(t) :, t\in \mathbb{R}\rbrace$ of nonempty subsets of $\mathbb{R}^n$ is said to be:
    	\begin{enumerate}
		\item  Invariant with respect to $\phi$, or $\phi$-invariant if
		$$ \phi(t, t_0, A(t_0))= A(t) \quad \quad \text{for all} \quad t>\geq t_0.$$
		\item Positive Invariant, or $\phi$-Positive invariant if
		$$ \phi(t, t_0, A(t_0))\subset A(t) \quad \quad \text{for all} \quad t>\geq t_0.$$
		\item Negative Invariant, or $\phi$- negative 
		$$ \phi(t, t_0, A(t_0)) \supset A(t) \quad \quad \text{for all} \quad t>\geq t_0.$$
	\end{enumerate} 
    
    \item \textbf{Nonautonomous Attractivity:} A nonempty, compact subset $\mathcal{A}$ of $\mathbb{R}^n$ is said to be
	\begin{enumerate}
		\item[i.] Forward attracting if 
		$$\lim_{t \rightarrow \infty} dist(\phi(t, t_0, u_0), A(t)) =0 \quad \quad \text{for all} \,\, u_0 \in \mathbb{R}^n \,\, \text{and} \,\, t_0 \in \mathbb{R}\, , $$
		\item[ii.] Pullback attracting if
		$$\lim_{t \rightarrow - \infty} dist(\phi(t, t_0, u_0), A(t)) =0 \quad \quad \text{for all} \,\, u_0 \in \mathbb{R}^n \,\, \text{and} \,\, t_0 \in \mathbb{R}\, . $$
	\end{enumerate}
 \item \textbf{Uniform Strictly Contracting Property:} A nonautonomous dynamical system $\phi$ satisfies the uniform strictly contracting property if for each $R >0$, there exist positive constants $K$ and $\alpha$ such that
\begin{equation}
\vert \phi(t, t_0,x_0) - \phi(t,t_0,y_0) \vert^2 \leq K e^{-\alpha(t-t_0)} \vert x_0 -y_0 \vert^2
\end{equation}
for all $(t,t_0) \in \mathbb{R}_\geq^2$ and $(x_0, y_0) \in \bar{\mathbb{B}}(0;R)$, where $\mathbb{\bar{B}}$ is a closed ball centered at the origin with radius $R>0$.

\end{enumerate}
\textbf{Remark:} The uniform strictly contracting property, together with the existence of a pullback absorbing, implies the existence of a global attractor that consists of a single entire solution.

These preliminaries provide a foundation for understanding nonautonomous dynamical systems, which is essential when analyzing models such as the nonautonomous HBV infection model. With these concepts in hand, one can analyze the stability of such systems and investigate the behavior of solutions over time \cite{kloeden2011nonautonomous,caraballo2017}.

\subsection{Model Formulation}
When the productive number  $\Lambda$ in \ref{sys-2} is time-dependent $\Lambda (t)$, that changes the system from autonomous to a nonautonomous model represented as follows
\begin{eqnarray} \label{nonauto}
	\begin{cases}
		\frac{dx}{dt} &=\quad \Lambda(t) -\mu_1 x - (1-\eta) \beta xz +qy \label{NS1} \\
		\frac{dy}{dt}&=\quad (1-\eta) \beta xz - \mu_2 y - qy \label{NS2}\\
		\frac{dz}{dt} &=\quad (1-\epsilon)py-\mu_3 z  \label{NS3}
	\end{cases}
\end{eqnarray}
which can be written as 
$$\frac{du(t)}{dt} = f(t, u(t)), \quad where \quad u(t)= (x(t),y(t),z(t))^T \in \mathbb{R}^3,\quad and \quad t\in \mathbb{R}.$$
with initial condition $u_0= (x_0,y_0,z_0)^T$
\subsection{Solution Properties}
The existence of a local solution follows from the fact that $f(t,u(t))$ is continuous, and its derivative is also continuous. The following Lemma proves the positiveness 
\begin{lemma}
	Let $\Lambda:\mathbb{R}\rightarrow [\Lambda_m\, ,\, \Lambda_M]$, then for any $(x_0,y_0,z_0) \in \mathbb{R}_+^3 := \lbrace (x,y,z)\in \mathbb{R}^3 : x\geq 0, y\geq 0, z\geq 0 \rbrace $ all the solutions of the system (\ref{NS1} - \ref{NS3}) corresponding to the initial point are:
	\begin{enumerate}
		\item[ i.] Non-negative for all 
		\item[ii.] Uniformly bounded.
	\end{enumerate}
\end{lemma}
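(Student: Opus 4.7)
The plan is to transpose the proofs of Lemma~\ref{L_1} and the autonomous global existence theorem to the time-dependent source, using only the bounds $\Lambda_m \leq \Lambda(t) \leq \Lambda_M$. Continuity of $f(t,u)$ in $t$ and smoothness in $u$ already furnish a unique local solution on some maximal interval $[t_0, t_0+T_{\max})$, so the content of the lemma is to promote this local solution to a global, non-negative, uniformly bounded one.

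For part~(i), I would follow the contradiction strategy of Lemma~\ref{L_1} verbatim. Variation of parameters applied to the third equation yields
\[ z(t) = z_0\,e^{-\mu_3(t-t_0)} + (1-\epsilon)p\int_{t_0}^{t} y(s)\,e^{-\mu_3(t-s)}\,ds, \]
so positivity of $z$ is inherited from that of $y$, and it suffices to handle $x$ and $y$. Suppose the claim fails, and let $\tau$ be the first time at which either variable vanishes. If $x(\tau)=0$ with $y(\tau)\geq 0$, the first equation gives $\dot{x}(\tau) = \Lambda(\tau)+qy(\tau) \geq \Lambda_m > 0$, contradicting $\dot{x}(\tau)\leq 0$. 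If $y(\tau)=0$ while $x,z>0$ on $[t_0,\tau)$, the integral representation above gives $z(\tau)>0$ (since $y$ is strictly positive on $[t_0,\tau)$), whence the second equation yields $\dot{y}(\tau)=(1-\eta)\beta x(\tau)z(\tau)>0$, the same contradiction. The joint case $x(\tau)=y(\tau)=0$ is subsumed by the first one because $\Lambda_m>0$.

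For part~(ii), I would mirror the proof of the autonomous Global Existence theorem. Summing the first two equations,
\[ \tfrac{d}{dt}(x+y) = \Lambda(t) - \mu_1 x - \mu_2 y \leq \Lambda_M - \mu^{*}(x+y), \qquad \mu^{*}:=\min\{\mu_1,\mu_2\}, \]
and the ODE comparison principle yields $x(t)+y(t) \leq \max\{x_0+y_0,\ \Lambda_M/\mu^{*}\}=:M$, uniformly in $t$ and in $t_0$. Substituting the bound $y\leq M$ into the explicit representation for $z$ delivers $z(t)\leq z_0\,e^{-\mu_3(t-t_0)}+(1-\epsilon)pM/\mu_3$, which is also uniformly bounded. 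The blow-up alternative of Theorem~\ref{local_L} then forces $T_{\max}=+\infty$.

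The main obstacle is in fact very mild; the proof is essentially a direct transposition of the autonomous argument, with $\Lambda$ replaced by $\Lambda_m$ wherever a pointwise lower bound is needed (the positivity contradictions) and by $\Lambda_M$ wherever a pointwise upper bound is needed (the comparison estimate for $x+y$). The only subtle point is ensuring strict positivity of $z(\tau)$ in the second sub-case of part~(i), rather than merely $z(\tau)\geq 0$, and this is exactly what the integral representation supplies. The two-sided bounds on $\Lambda$ are precisely what make the resulting estimates uniform in $t_0$, which is the genuinely new feature of the nonautonomous setting and will be needed in the subsequent pullback-attractor analysis.
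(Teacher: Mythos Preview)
Your proof is correct. Part~(i) matches the paper exactly; the paper simply says the argument is the same as in the autonomous Lemma~\ref{L_1}, and you have spelled that out.

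For part~(ii), however, you take a different route from the paper. You sum only the first two equations, obtain a uniform bound on $x+y$ via comparison with $\Lambda_M/\mu^*$, and then feed that bound into the variation-of-parameters formula for $z$. The paper instead sums \emph{all three} equations, getting
\[
\tfrac{d}{dt}(x+y+z)=\Lambda(t)-\mu_1 x-(\mu_2-(1-\epsilon)p)y-\mu_3 z,
\]
which forces it to \emph{assume} the additional structural condition $\mu_2>(1-\epsilon)p$ and to set $\alpha=\min\{\mu_1,\mu_2-(1-\epsilon)p,\mu_3\}$; it then obtains a single absorbing bound $x+y+z\le \max\{x_0+y_0+z_0,\Lambda_M/\alpha\}$. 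Your argument is the more economical one: it is exactly the autonomous boundedness proof transplanted, and it dispenses with the extra hypothesis $\mu_2>(1-\epsilon)p$. The paper's version, on the other hand, delivers a single explicit absorbing ball in the $\ell^1$-norm for the full state $(x,y,z)$, which plugs directly into the positively invariant absorbing set $B_\epsilon$ used in the subsequent pullback-attractor discussion.
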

\begin{proof}
	\begin{enumerate}
		\item[i.] The proof is similar to the positiveness of the autonomous case that was introduced earlier. 
		\item[ii.] Set $\Vert X(t) \Vert_1 = x(t)+y(t)+z(t)$, if we combine the three equations in (\ref{NS1} - \ref{NS3}) we get:
		\begin{equation}
			\dot{x}(t)+\dot{y}(t)+\dot{z}(t)= \Lambda(t) -\mu_1 x -(\mu_2 -(1-\epsilon)p ) y - \mu_3 z
		\end{equation} 
		assume $\mu_2 > (1-\epsilon)p$ and let $\alpha= min{\mu_1 , \mu_2 - (1-\epsilon)p, \mu_3}, $ then we get 
		\begin{equation}
			\frac{d}{dt}\Vert X(t) \Vert_1 \leq \Lambda_M -\alpha \Vert X(t) \Vert_1
		\end{equation} 
		this implies that
		\begin{equation}
			\Vert X(t) \Vert_1 \leq max \lbrace x_0+y_0+z_0 , \frac{\Lambda_M}{\alpha} \rbrace
		\end{equation}
		Thus, the set $B_\epsilon= \lbrace (x,y,z) \in \mathbb{R}_+^3 : \epsilon \leq x(t)+y(t)+z(t) \leq \frac{\Lambda_M}{\alpha} + \epsilon \rbrace $ is positively invariant and absorbing in $\mathbb{R}_+^3$. 
	\end{enumerate}
\end{proof}
\subsection{Stability Analysis}
This section discusses the stability analysis of the systems \ref{nonauto}; first, we show the uniform strictly contracting property and then prove that the system has a positively absorbing set. Then, we provide sufficient conditions that stabilize the system \ref{nonauto}.
\begin{theorem}
	The nonautonomous system (\ref{NS1} - \ref{NS3}) satisfies a uniform strictly contracting property, if $\mu_2 > (1-\epsilon)p$. 
\end{theorem}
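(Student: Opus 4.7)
The plan is to compare two solutions of \eqref{nonauto} started at the same initial time $t_0$ from initial data in the closed ball $\bar{\mathbb{B}}(0;R)$ but with different initial values, and to show that their difference decays exponentially in the Euclidean norm. Crucially, because $\Lambda(t)$ enters only as an inhomogeneous forcing in the first equation and is identical for both solutions, it drops out of every difference equation, so the resulting difference system is autonomous-like and can be attacked by a quadratic Lyapunov argument in the spirit of Lemma \ref{lem1}.

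First I would denote the two solutions by $u_i(t) = (x_i(t), y_i(t), z_i(t))$ for $i=1,2$, and set $X = x_1 - x_2$, $Y = y_1 - y_2$, $Z = z_1 - z_2$. Splitting the only nonlinearity as $x_1 z_1 - x_2 z_2 = x_1 Z + z_2 X$ and subtracting the two copies of \eqref{nonauto} yields
\begin{equation*}
\begin{cases}
\dot X = -\mu_1 X - (1-\eta)\beta\,(x_1 Z + z_2 X) + q Y, \\
\dot Y = (1-\eta)\beta\,(x_1 Z + z_2 X) - (\mu_2 + q) Y, \\
\dot Z = (1-\epsilon) p\, Y - \mu_3 Z.
\end{cases}
\end{equation*}
Then I would invoke the preceding boundedness lemma (which itself uses the hypothesis $\mu_2 > (1-\epsilon) p$) to conclude that both $x_1(t)$ and $z_2(t)$ are bounded by some constant $M = M(R)$ uniformly in $t \ge t_0$, since the set $B_\epsilon$ is positively invariant and absorbing.

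Next, with the candidate Lyapunov functional $V(t) = X(t)^2 + Y(t)^2 + Z(t)^2$, I would compute $\tfrac{1}{2}\dot V = X\dot X + Y\dot Y + Z\dot Z$, collect terms, and estimate all cross products via Young's inequality $2ab \le \gamma a^2 + \gamma^{-1} b^2$ with carefully tuned weights, using $|x_1|,|z_2| \le M$ to make every off-diagonal coefficient a constant rather than a function of $t$. The target estimate is
\begin{equation*}
\dot V(t) \le -\alpha\, V(t)
\end{equation*}
for some $\alpha = \alpha(R) > 0$, after which Gronwall's lemma immediately yields $V(t) \le V(t_0)\, e^{-\alpha(t-t_0)}$, which is exactly the uniform strictly contracting inequality (with $K = 1$, and with $K$ and $\alpha$ depending only on $R$).

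The main obstacle will be the weight-tuning step: the three diagonal dissipation coefficients $\mu_1$, $\mu_2 + q$, $\mu_3$ must absorb the cross terms generated by $q$, $(1-\eta)\beta x_1$, $(1-\eta)\beta z_2$, and $(1-\epsilon) p$, and the Young weights must be arranged so that the resulting symmetric quadratic form in $(X,Y,Z)$ is uniformly negative definite. The hypothesis $\mu_2 > (1-\epsilon) p$ does double duty: it provides the absorbing bound $M(R)$ needed to freeze the coefficients of $X$ and $Z$, and it gives the $Y$-equation enough dissipative headroom to offset the $(1-\epsilon) p\, Y Z$ exchange with the $Z$-equation so that a single exponent $\alpha$ controls all three components simultaneously.
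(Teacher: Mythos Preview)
Your proposal follows essentially the same route as the paper: form the difference system for two solutions, take $V=|X|^2+|Y|^2+|Z|^2$, estimate the cross terms by Young's inequality, invoke the uniform bound on the individual solutions (which is where $\mu_2>(1-\epsilon)p$ enters), and conclude $\dot V\le -\alpha V$. The paper's version inserts an additional decomposition $X=X_+-X_-$ before applying Young (mirroring its Lemma~\ref{lem1}), which your direct computation $\tfrac12\dot V=X\dot X+Y\dot Y+Z\dot Z$ renders unnecessary; and the paper closes by listing explicit parameter conditions (on $\mu_1,\mu_3,q,\beta,\Lambda_M$, not just $\mu_2>(1-\epsilon)p$) needed to make the resulting coefficients $\nu_1,\nu_2,\nu_3$ positive---precisely the ``weight-tuning obstacle'' you flagged.
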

\begin{proof}
	
	Let 
	\begin{eqnarray}
		\begin{cases}
			(x_1,y_1,z_1) &=\quad(x(t,t_0,x_0^1),y(t,t_0,y_0^1),z(t,t_0,z_0^1))\\
			\text{and } (x_2,y_2,z_2)& = \quad(x(t,t_0,x_0^2),y(t,t_0,y_0^2),z(t,t_0,z_0^2)) 
		\end{cases}
	\end{eqnarray}
	and are two solutions of the system (\ref{NS1} - \ref{NS3} ) by similar computational in autonomous case we get 
	
	\begin{eqnarray}\label{sys-4}\begin{cases} 
			\frac{d}{dt}(x_1 - x_2) = -(\mu_1+(1-\eta)\beta z_1))(x_1 - x_2)+ q(y_1-y_2)-(1-\eta)\beta x_2 (z_1-z_2)\\
			\frac{d}{dt}(y_1 - y_2)= (1-\eta)\beta z_1(x_1-x_2)-(q+\mu_2)(y_1-y_2)+(1-\eta)\beta x (z_1-z_2)  \\
			\frac{d}{dt}(z_1-z_2) = (1-\epsilon)p(y_1-y_2) -\mu_3 (z_1-z_2)  
	\end{cases} \end{eqnarray}
	
	Now, let $X=x_1 - x_2$, $Y=y_1 - y_2$ and $Z=z_1-z_2$ then system \ref{sys-4} becomes
	\begin{eqnarray} 
		\frac{dX}{dt} &=& -(\mu_1+(1-\eta)\beta z_1)X+ qY -(1-\eta)\beta x_2 Z \label{eq-x}\\
		\frac{dY}{dt}&=& (1-\eta)\beta z_1 X-(q+\mu_2)Y+(1-\eta)\beta x_2 Z \label{eq-y} \\
		\frac{dZ}{dt} &=& (1-\epsilon)pY -\mu_3 Z \label{eq-z}
	\end{eqnarray}

	This implies that
	\begin{equation}\label{xineq11}
		\begin{split}
			\frac{1}{2}\frac{d}{dt} |X|^2 \leq & -[\mu_1 + (1-\eta)\beta z_1 - \frac{1}{2}q- \frac{1}{2} (1-\eta)\beta x_2]|X|^2  + \frac{1}{2} q |Y|^2 + \frac{1}{2} (1-\eta)\beta x_2 |Z|^2\\ \quad & - q(Y_-X_+ + Y_+X_-) - (1-\eta) \beta x (X_+Z_+ - X_-Z_-)
		\end{split}
	\end{equation}
	Similarly, by using the same computational technique, we got
	\begin{equation}\label{yineq11}
		\begin{split}
			\frac{1}{2}\frac{d}{dt} |Y|^2 \leq & \frac{1}{2}(1-\eta)\beta z_1|X|^2 -[(q+\mu_2) - \frac{1}{2}(1-\eta)\beta \bar{z} -\frac{1}{2}(1-\eta)\beta x_2 ]|Y|^2 + \frac{1}{2} (1-\eta)\beta x_2 |Z|^2 \\ \quad & - (1-\eta) \beta z-1 (X_+Y_-+  X_-Y_+) - (1-\eta) \beta x (Z_+Y_- +  Z_-Y_+)
		\end{split}
	\end{equation}
	and
	\begin{equation}\label{zineq11}
		\frac{1}{2}\frac{d}{dt} |Z|^2 \leq  -[\mu_3 - \frac{1}{2}(1-\epsilon)]|Z|^2 + \frac{1}{2}(1-\epsilon)p |Y|^2 - (1-\epsilon)p(Y_+Z_- +Y_-Z_+)
	\end{equation}
	Now, by adding \ref{xineq} , \ref{yineq} and \ref{zineq} we get
 
	\begin{equation}\label{xyzineq1}
		\begin{split}
			\frac{1}{2} \frac{d}{dt} \left( |X|^2+|Y|^2 + |Z|^2 \right)  \leq & -\left[ \mu_1 + \frac{1}{2}(1-\eta)\beta z_1 - \frac{1}{2}q- \frac{1}{2} (1-\eta)\beta x_2 \right] |X|^2  \\ \quad & - \left[\mu_2 + \frac{1}{2}q -\frac{1}{2}(1-\eta)\beta z_1 -\frac{1}{2}(1-\eta)\beta x_2 - (1-\epsilon)p  \right]|Y|^2 \\ 
			\quad & - \left[\mu_3 - \frac{1}{2}(1-\epsilon)p- (1-\eta)\beta x \right]|Z|^2 - [ q(Y_+X_- + Y_-X_+)\\ 
			\quad & +(1-\eta) \beta x (X_+Z_+ + X_-Z_-) +(1-\eta)(X_+Y_- + X_-Y_+) \\
			\quad & \quad \, \, \, (1-\eta)\beta x_2 (Z_+Y_- Z_-Y_+) + (1-\epsilon)p(Y_+Z_- +Y_-Z_+) ]
		\end{split}
	\end{equation}
	
	Since $x_2 $ and $z_1$ are bounded, assume that $\gamma_2= max \lbrace x_2 \rbrace  $ and $\gamma_1 = max \lbrace z_1 \rbrace $.\\
	Therefore, 
	\begin{equation}\label{S1}
		\frac{d}{dt} \left( |X|^2+|Y|^2 + |Z|^2 \right) \leq -\nu_1 |X|^2 -\nu_2 |Y|^2 -\nu_3 |Z|^2 - W
	\end{equation}
	where 
	
	\begin{eqnarray*}
		\nu_1 &= &  2\mu_1 + (1-\eta)\beta \gamma_1 - (1-\eta)\beta \gamma_2 - q   \\ 
		\nu_2 &= &  \mu_2 + q - (1-\eta)\beta \gamma_1 - (1-\eta)\beta \gamma_2 - (1-\epsilon)p  \\ 
		\nu_3 &= & 2\mu_3 - (1-\epsilon)p- (1-\eta)\beta \gamma_2  \\
		W  &= &  q(Y_+X_- + Y_-X_+) +(1-\eta) \beta x (X_+Z_+ + X_-Z_-)  +(1-\eta)(X_+Y_- + X_-Y_+) \\
		\quad &\quad & +(1-\eta)\beta x (Z_+Y_- Z_-Y_+) + (1-\epsilon)p(Y_+Z_- + Y_-Z_+)
	\end{eqnarray*}
	Let $\alpha= min\lbrace \nu_1,\nu_2\nu_3\rbrace $, then equation \ref{S1} becomes
	\begin{equation}\label{S2}
		\frac{d}{dt} \left( |X|^2+|Y|^2 + |Z|^2 \right) \leq -\alpha( |X|^2 + |Y|^2+ |Z|^2 ) - W
	\end{equation}
	Which has a solution
	\begin{equation}
		|X|^2+|Y|^2 + |Z|^2 \leq K e^{-\alpha (t-t_0)}(|X_0|^2+|Y_0|^2 + |Z_0|^2)
	\end{equation}
	Notice that, for $\nu_1$, $\nu_2$, $\nu_3$ to following positive conditions must hold.
	\begin{eqnarray}
		2\mu_1 + (1-\eta)\beta b_1 & > & (1-\eta)\beta \frac{\Lambda_M}{min(\mu_1 , \mu_2)} + q  \label{cond11} \\ 
		2 \mu_2 + q & > & (1-\eta)\beta \left(  b_1 +  \frac{\Lambda_M}{min(\mu_1, \mu_2)} \right)  + (1-\epsilon )p \label{cond2}\\ 
		2\mu_3 & > & (1-\epsilon)p +  \frac{(1-\eta) \beta \Lambda_M}{min(\mu_1 , \mu_2)} \label{cond31}
	\end{eqnarray}
\end{proof}

\begin{theorem}
	Suppose $\Lambda : \mathbb{R} \rightarrow [\Lambda_m\, ,\, \Lambda_M]$, where $0<\Lambda_m<\Lambda-M < \infty $, is continuous, then the system (\ref{NS1} - \ref{NS3} ) has a pullback attractor $\mathcal{A}= \lbrace A(t) : \,\, t\in \mathbb{R} \rbrace$ inside $\mathbb{R}_+^3$.
	Moreover, if $\mu_2> (1-\epsilon)p$, and the conditions (\ref{cond11} - \ref{cond31} ) hold, then the solution of the system is exponentially stable.
\end{theorem}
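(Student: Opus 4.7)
The plan is to prove the two claims in sequence: first the existence of a pullback attractor, then the exponential stability under the extra conditions. The strategy is to construct a pullback absorbing family from the a priori bounds, invoke the standard nonautonomous attractor existence theorem recalled in the preliminaries, and then combine this with the uniform strictly contracting property established in the preceding theorem.

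First I would build a pullback absorbing set inside $\mathbb{R}_+^3$. Since $\Lambda(t)\leq \Lambda_M$ uniformly in $t$, adding the first two equations of the system and applying the ODE comparison principle exactly as in the autonomous boundedness theorem yields
\begin{equation*}
x(t)+y(t) \leq \max\Bigl\{x_0+y_0,\ \tfrac{\Lambda_M}{\min(\mu_1,\mu_2)}\Bigr\}.
\end{equation*}
Feeding this bound into the third equation produces a uniform eventual bound on $z(t)$ as well. Combined with the positivity result already proved, this shows that the closed set $D=\bar{\mathbb{B}}(0;R)\cap \mathbb{R}_+^3$, for a suitably large $R>0$, is a $\phi$-positively invariant, pullback absorbing family in $\mathbb{R}_+^3$, with absorbing time depending only on the norm of the initial datum and not on $t_0$.

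Next, because the process $\phi(t,t_0,\cdot)$ is continuous in its initial datum (Picard--Lindel\"of applied to the locally Lipschitz field $f(t,u)$) and because $D$ is compact in the finite-dimensional space $\mathbb{R}^3$, the standard nonautonomous attractor existence theorem from \cite{kloeden2011nonautonomous,caraballo2017} applies and delivers a pullback attractor $\mathcal{A}=\{A(t):t\in\mathbb{R}\}$ inside $\mathbb{R}_+^3$, with each $A(t)$ nonempty, compact, and $\phi$-invariant. For the exponential stability claim I would then invoke the preceding theorem: under $\mu_2>(1-\epsilon)p$ and conditions (\ref{cond11})--(\ref{cond31}) the process satisfies the uniform strictly contracting property. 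By the remark in the preliminaries, this property combined with the pullback absorbing set from the previous step forces $A(t)=\{\bar{u}(t)\}$ to reduce to a single bounded entire solution. Applying the contraction inequality with $v_0=\bar{u}(t_0)$ then gives
\begin{equation*}
|\phi(t,t_0,u_0)-\bar{u}(t)|^2 \leq K e^{-\alpha(t-t_0)}|u_0-\bar{u}(t_0)|^2,
\end{equation*}
which is precisely the claimed exponential stability about the distinguished time-dependent solution $\bar{u}(t)$.

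The main obstacle, I expect, will be checking that the absorbing time and the contraction constants $(K,\alpha)$ are genuinely uniform in $t_0$ over bounded pullback families, so that the abstract attractor theorem and the singleton remark truly apply; this requires revisiting the earlier estimates to confirm that all $t_0$-dependence is controlled by the uniform bound $\Lambda(t)\leq \Lambda_M$. A subsidiary point is the uniqueness of the bounded entire solution $\bar{u}$: this must be obtained by applying the contraction estimate to two such solutions and letting $t_0\to-\infty$. Once these uniformity checks are in hand, the rest of the proof is essentially the assembly of previously established pieces.
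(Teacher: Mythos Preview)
Your proposal is correct and follows essentially the same approach as the paper: the paper's own proof consists of the single sentence ``The proof follows the previous proofs,'' meaning exactly the assembly you describe---combine the positively invariant absorbing set from the boundedness lemma with the uniform strictly contracting property of the preceding theorem and invoke the remark in the preliminaries that these together yield a singleton pullback attractor. Your write-up is in fact considerably more explicit than the paper's, and the uniformity checks you flag as potential obstacles are precisely the details the paper leaves to the reader.
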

\begin{proof}
    The proof follows the previous proofs.
\end{proof}

\section{Numerical Results}
To perform numerical simulations, we use numerical solvers to integrate the system of ordinary differential equations (ODEs) over time. In this case, we will use MATLAB to perform the simulations. 

At the disease-free equilibrium point $(\frac{\Lambda}{\mu_1}, 0,0)$, parameters have to satisfy condition \eqref{cond-1}. We will use the parameters from Table \ref{T2} that satisfy this condition and present the results as follows:
\begin{table}[ht]
	\caption{List of parameters that satisfied conditions \ref{cond-11}}\label{T2}
	\centering
	\begin{tabular}{|c||c|c|c|c|c|c|c|c|c|}
		\hline 
		parameters&$\Lambda$ & $\mu_1$     & $\mu_2$ & $\mu_3$ & $\beta$ & $\eta$ & $\epsilon$ & $p$& $q$ \\
		\hline
		values& 9.8135&  2          &3        & 7       &0.2  & 0.2  &  0.5    &0.01  &5 \\
		\hline
	\end{tabular}
\end{table}

\begin{figure}[ht]
\centering
	\scalebox{.40}{\includegraphics{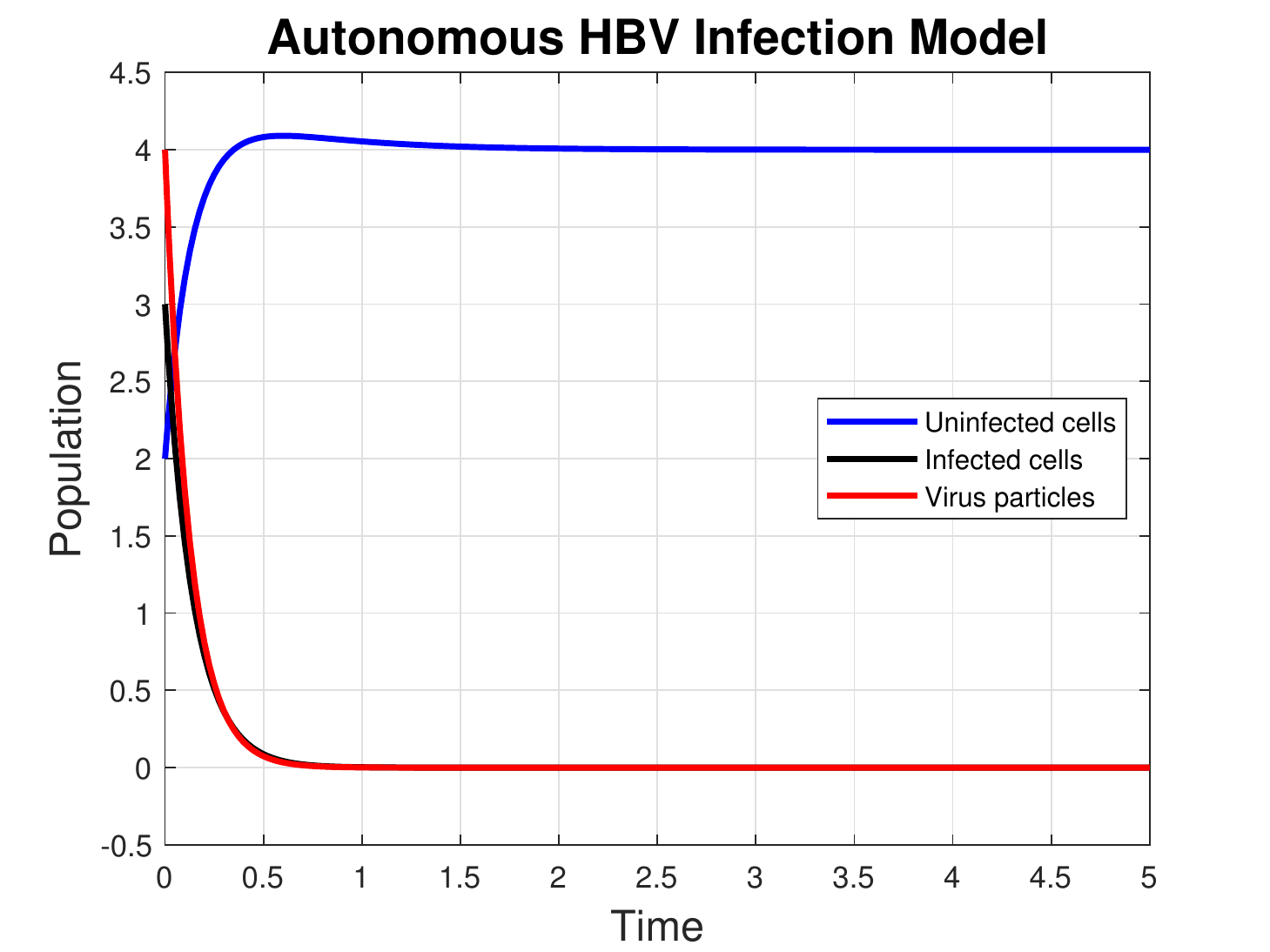}} 
	\caption{The Solution of the model \ref{sys-2} around Diseases-free equilibrium. } 
\end{figure}

\begin{table}[ht]
	\caption{List of parameters that satisfied conditions \ref{condend}}
	\centering
	\begin{tabular}{|c||c|c|c|c|c|c|c|c|c|}
		\hline 
		parameters&$\Lambda$ & $\mu_1$     & $\mu_2$ & $\mu_3$ & $\beta$ & $\eta$ & $\epsilon$ & $p$& $q$ \\
		\hline
		values& 100&  5          &7        & 2       &0.7  & 0.2  &  0.2    &2  &6 \\
		\hline
	\end{tabular}
\end{table}

\begin{figure}[ht]
	\centering
	\scalebox{.3}{\includegraphics{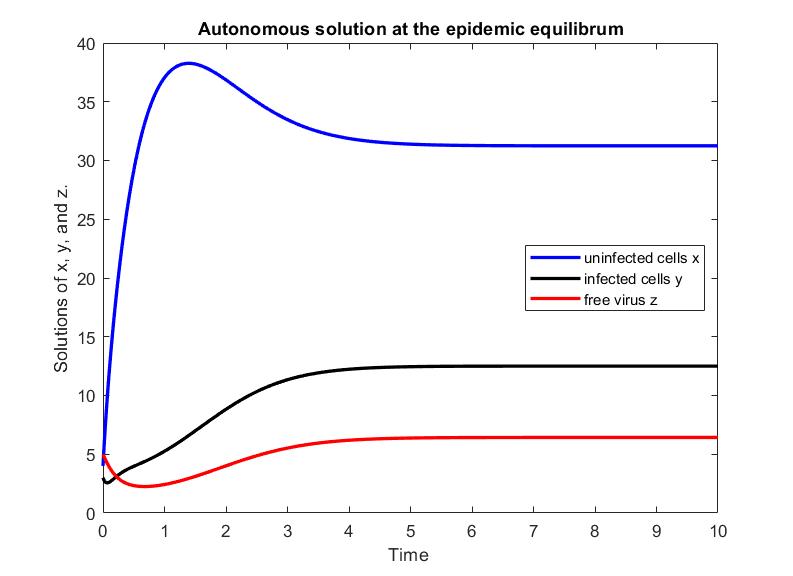}}  
	\caption{Numerical simulation of the autonomous HBV infection model at the epidemic equilibrium.}
\end{figure}


\subsection{Nonautonomous Case}
Figure \ref{nonauto-fig} shows the solutions of the system \ref{nonauto} using an appropriate set of parameters that satisfied the necessary conditions. We approximate the healthy cells' productive function by $\Lambda(t) = cos(2t+\pi/3)+10$, which is a positive and bounded function. On the interval $[0,\,5]$ for the other parameters in the table \ref{set1}.

\begin{table}[ht]
	\caption{ Set of parameters that satisfy the required conditions}\label{set1}
	\centering
	\begin{tabular}{|c|c|c|c|c|c|c|c|c|}
		\hline 
		$\mu_1$     & $\mu_2$ & $\mu_3$ & $\beta$ & $\eta$ & $\epsilon$ & $p$& $q$& $\Lambda$ \\
		\hline
		2          &3        & 7       &0.2  & 0.2  &  0.5    &0.01  &5 &      12\\
		\hline
	\end{tabular}
\end{table}

\begin{figure}[H]
	\centering
	\scalebox{.25}{\includegraphics{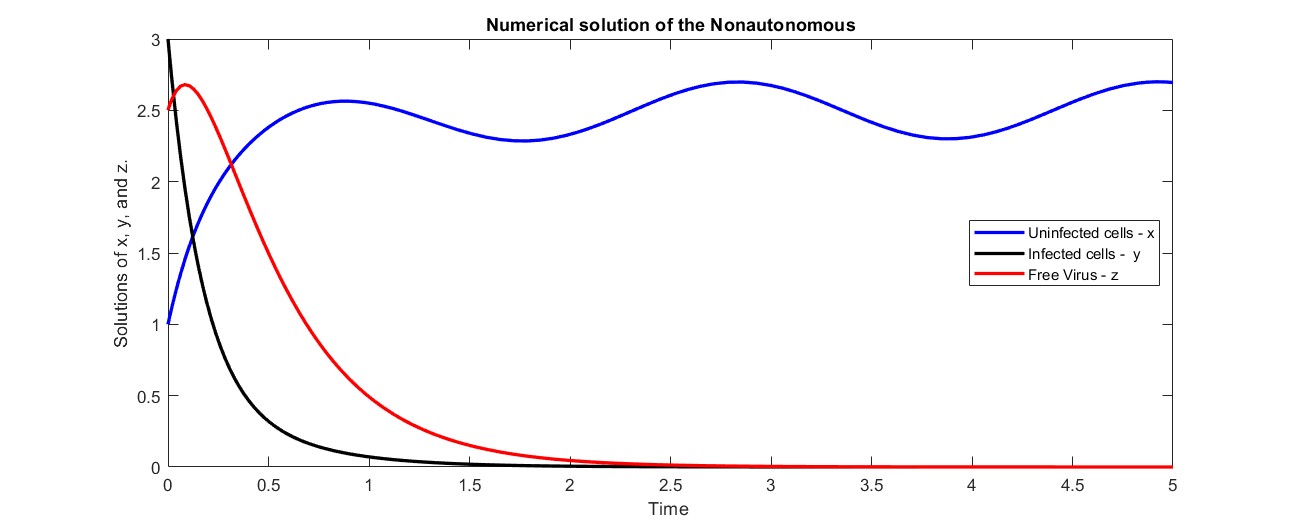}}  
	\caption{Numerical simulations of the nonautonomous HBV infection model at the disease-free equilibrium (DFE). } \label{nonauto-fig}
\end{figure}

\begin{table}[H]
	\caption{ This set of parameters satisfy both  Auto/nonautonomous conditions} \label{set2}
	\centering
	\begin{tabular}{|c|c|c|c|c|c|c|c|c|}
		\hline 
		$\mu_1$     & $\mu_2$ & $\mu_3$ & $\beta$ & $\eta$ & $\epsilon$ & $p$& $q$& $\Lambda$ \\
		\hline
		6         &7        & 0.1       &0.3  & 0.5  &  0.1    &5  &10 &      20\\
		\hline
	\end{tabular}
\end{table}

\begin{figure}[H]
	\centering
	\scalebox{.23}{\includegraphics{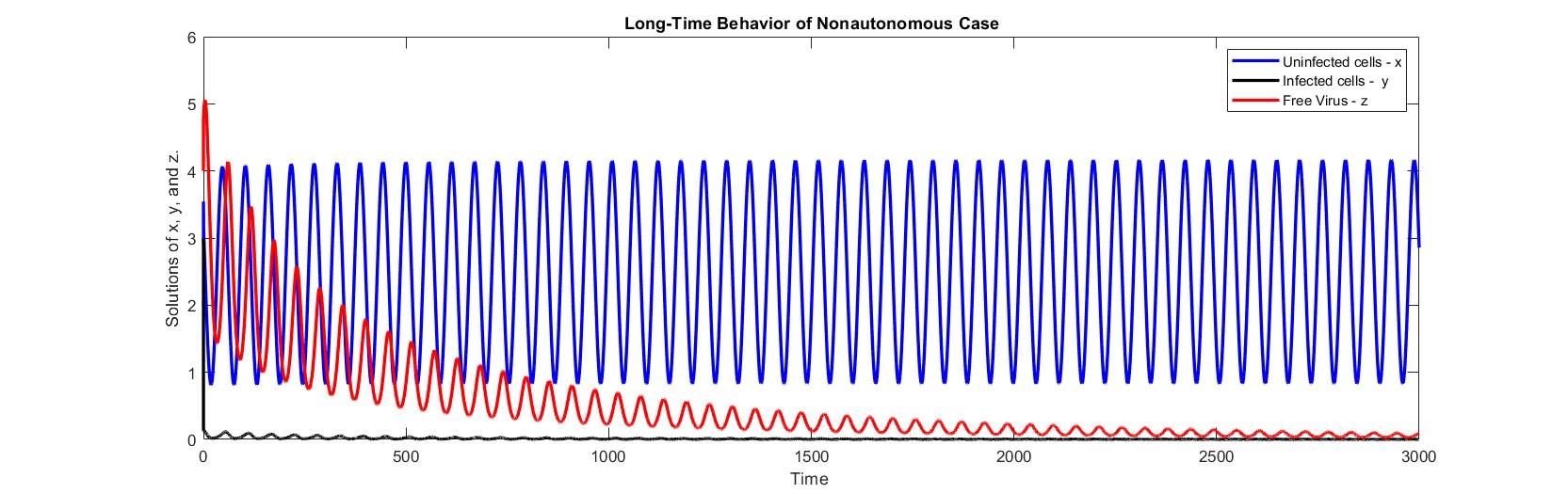}}  
	\caption{Numerical simulations of the nonautonomous HBV infection model (Equations \ref{NS1}, \ref{NS2}, and \ref{NS3}) with time-dependent production number $\Lambda(t)$. }\label{set2-fig1}
\end{figure}
For the same set of parameters \ref{set2}, the autonomous model blowup.
\begin{figure}[ht]
	\centering
	\scalebox{.35}{\includegraphics{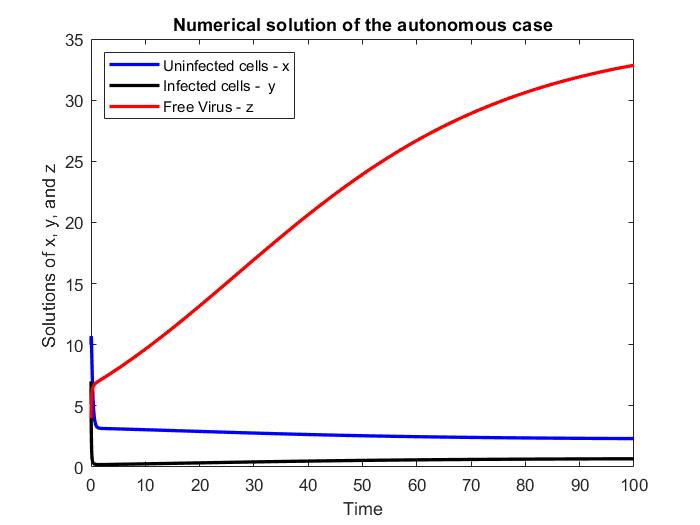}}  
	\caption{The free virus solution $z(t)$  blowup, for the same set of parameters that is used on the nonautonomous case.}\label{set2-fig2}
\end{figure}

\section{Conclusion}
 This study presents an enhanced model for Hepatitis B Virus (HBV) transmission, including autonomous and nonautonomous cases and medical treatment impacts. It validates unique solutions and assesses their positivity over time, with a detailed stability analysis at the equilibrium points. Local and global stability are explored using the Jacobian matrix, $R_0$, and a Lyapunov function, respectively, linking stability conditions to $R_0$. Numerical simulations demonstrate the disease-free equilibrium's stability and provide insights into HBV dynamics and intervention effectiveness. Nonautonomous systems can better represent HBV transmission dynamics by including time-dependent factors, while autonomous systems assume constant parameters. Choosing between the two depends on the research question or application, but nonautonomous models may offer more accurate insights into real-world situations and control strategies.

\bibliographystyle{splncs04}
\bibliography{References}

\end{document}